\documentclass[12pt,letterpaper,reqno]{amsart}

\usepackage{amsmath}
\usepackage{amssymb}
\usepackage{amsthm}
\usepackage{indentfirst}
\usepackage{xspace}
\usepackage{multirow}
\usepackage{hyperref}
\usepackage{xcolor}
\definecolor{darkblue}{rgb}{0,0,0.4}
\hypersetup{pdfauthor={David Dumas and Andrew Neitzke},pdftitle={Asymptotics of Hitchin's metric on the Hitchin section},colorlinks=true,urlcolor=darkblue,linkcolor=darkblue,citecolor=darkblue}
\usepackage{verbatim}
\usepackage[letterpaper,margin=1in]{geometry}
\usepackage{thmtools}
\usepackage[all]{xy}
\usepackage{longtable}
\usepackage{capt-of}
\usepackage{enumitem}
\usepackage{graphicx}

\newlength{\bibitemsep}\setlength{\bibitemsep}{.2\baselineskip plus .05\baselineskip minus .05\baselineskip}
\newlength{\bibparskip}\setlength{\bibparskip}{0pt}
\let\oldthebibliography\thebibliography
\renewcommand\thebibliography[1]{%
  \oldthebibliography{#1}%
  \setlength{\parskip}{\bibitemsep}%
  \setlength{\itemsep}{\bibparskip}%
}

\setlist{itemsep = 0.25em, topsep = 0.25em}

\declaretheoremstyle[spaceabove=0.25cm,spacebelow=0.25cm,notefont=\normalfont\bfseries, notebraces={(}{)}]{theorem}
\declaretheoremstyle[spaceabove=0.25cm,spacebelow=0.25cm,bodyfont=\normalfont,notefont=\normalfont\bfseries, notebraces={(}{)}]{noital}
\declaretheoremstyle[spaceabove=0.25cm,spacebelow=0.25cm,bodyfont=\normalfont\color{darkgreen},notefont=\normalfont\bfseries, notebraces={(}{)}]{green}
\declaretheoremstyle[spaceabove=0.25cm,spacebelow=0.25cm,bodyfont=\normalfont,notefont=\normalfont\bfseries,qed=$\qedsymbol$,notebraces={(}{)}]{proofstyle}

\declaretheorem[name=Theorem,style=theorem]{thm}

\declaretheorem[name=Lemma,style=theorem,sibling=thm]{lem}

\numberwithin{equation}{section}

\newcommand{\cB}{\ensuremath{\mathcal B}}

\newcommand{\cL}{\ensuremath{\mathcal L}}

\newcommand{\cM}{\ensuremath{\mathcal M}}

\newcommand{\cE}{{\mathcal E}}

\newcommand{\R}{\ensuremath{\mathbb R}}
\newcommand{\C}{\ensuremath{\mathbb C}}
\newcommand{\PP}{\ensuremath{\mathbb P}}

\newcommand{\tensor}{\otimes}

\newcommand{\noproof}{\hfill\qedsymbol}

\newcommand{\half}{\ensuremath{\frac{1}{2}}}

\newcommand{\kahler}{K\"ahler\xspace}
\newcommand{\hk}{hyperk\"ahler\xspace}

\newcommand{\I}{{\mathrm i}}
\newcommand{\e}{{\mathrm e}}
\newcommand{\de}{\mathrm{d}}
\renewcommand{\sf}{{\mathrm {sf}}}
\newcommand{\near}{{\mathrm {near}}}
\newcommand{\far}{{\mathrm {far}}}

\newcommand{\SL}{{\mathrm {SL}}}

\newcommand{\fsu}{{\mathfrak {su}}}

\newcommand{\abs}[1]{\lvert#1\rvert}
\newcommand{\norm}[1]{\lVert#1\rVert}

\newcommand{\eps}{\epsilon}

\newcommand{\ti}[1]{\textit{#1}}

\renewcommand{\leq}{\leqslant}
\renewcommand{\geq}{\geqslant}
\renewcommand{\ge}{\geqslant}
\newcommand{\bdry}{\partial}
\newcommand{\ident}{\equiv}

\DeclareMathOperator{\im}{Im}
\DeclareMathOperator{\re}{Re}
\DeclareMathOperator{\tr}{tr}
\DeclareMathOperator{\Tr}{Tr}

\DeclareMathOperator{\End}{End}

\DeclareMathOperator{\SU}{SU}

\newcommand{\insfig}[3]{\begin{figure}[htbp] \centering \includegraphics[scale=#2]{#1.pdf} \caption{#3} \label{fig:#1} \end{figure}}

\newenvironment{rmenumerate}{\begin{enumerate}}{\end{enumerate}}

\begin{document}

\setcounter{page}{1}

\title{Asymptotics of Hitchin's metric on the Hitchin section}
\author[D. Dumas]{David Dumas}
\author[A. Neitzke]{Andrew Neitzke}
\date{March 27, 2018.  (v1: February 20, 2018)}

{\abstract{We consider Hitchin's \hk metric $g$ on the moduli space
    $\cM$ of degree zero $\SL(2)$-Higgs bundles over a compact Riemann surface.
    It has been conjectured that, when
    one goes to infinity along a generic ray in $\cM$, $g$ converges to
    an explicit ``semiflat'' metric $g^\sf$, with an exponential rate of
    convergence. We show that this is indeed the case for the
    restriction of $g$ to the tangent bundle of the 
    Hitchin section $\cB \subset \cM$.}}

\maketitle

\section{Introduction}

\subsection{Summary}
Fix a compact Riemann surface $C$. In \cite{hitchin87a} Hitchin studied
the moduli space $\cM$ of degree zero \ti{$\SL(2)$-Higgs bundles on $C$},
and showed in particular that $\cM$ admits a canonically defined \hk metric
$g$.

In \cite{Gaiotto:2008cd,Gaiotto:2009hg} a new conjectural construction of $g$ was given.
The full conjecture is complicated to state (see \cite{notes-hk} for a review), 
but one of its consequences is a concrete picture of the generic asymptotics
of $g$, as follows.

The non-compact space $\cM$ is fibered over the space $\cB$ of holomorphic 
quadratic differentials on $C$. We consider
a path to infinity in $\cM$, lying over a generic ray
$\{t \phi_0\}_{t \in \R_+} \subset \cB$, where $\phi_0$ has only
simple zeroes. Along such a path, the prediction is that
\begin{equation} \label{eq:intro-prediction}
  g = g^\sf + O\left(\e^{-4 \alpha t^\half}\right),
\end{equation}
where $g^\sf$ is the \ti{semiflat} metric, given by a simple
explicit formula (see \S\ref{sec:semiflat}), and $\alpha$
is any constant with
$\alpha < M(\phi_0)$, where $M(\phi_0)$ is the length
of the shortest saddle connection in the metric $\abs{\phi_0}$
(see \S\ref{sec:threshold}).

Very recently Mazzeo-Swoboda-Weiss-Witt \cite{Mazzeo2017} have shown
that, along a generic ray, the difference $g - g^\sf$ does
decay at least \ti{polynomially} in $t$.
This work motivated us to wonder whether
one could show directly that the decay is actually exponential.
In this paper we show that this is indeed the case for
the restriction of $g$ to the tangent bundle of a 
certain embedded copy of $\cB$ inside $\cM$, the \ti{Hitchin section}:
\eqref{eq:intro-prediction} holds there 
for any $\alpha < \half M (\phi_0)$. (Unfortunately, we miss the conjectured 
sharp constant by a factor of $2$.)
The precise statement is given in \autoref{thm:main} below.

\subsection{The strategy} \label{sec:strategy}

Points of $\cB$ correspond to holomorphic quadratic differentials $\phi$ on $C$.
Since these form a linear space, tangent vectors to $\cB$ likewise correspond
to holomorphic quadratic differentials $\dot\phi$.
Given $(\phi, \dot\phi) \in T\cB$,
both $g_\phi(\dot\phi, \dot\phi)$ and $g_\phi^\sf(\dot\phi, \dot\phi)$ 
arise as integrals over $C$ (which can be found in \eqref{eq:l2norm-combined}
and \eqref{eq:l2sfnorm-combined} below).
The integrand in $g_\phi^\sf$ is completely explicit, while the integrand in $g_\phi$ 
depends on the solutions of two elliptic scalar PDEs on the surface $C$.
To prove \eqref{eq:intro-prediction} for some given $\alpha$, we need to
show that these two integrals agree up to $O(\e^{-4 \alpha t^\half})$.

\insfig{surface-crop}{0.85}{A genus $2$ surface $C$ equipped with a holomorphic 
quadratic differential $\phi_0$ which has $4$ simple zeroes (orange crosses). 
The shortest saddle connection is shown in green; its length is $M(\phi_0)$.
We have chosen $\alpha$ slightly smaller than $\half M(\phi_0)$. 
$C_\near$ is the union of $4$ disks $D_i$ centered on the zeroes, all with the 
same radius $\alpha$. The complementary region $C_\far$ is shaded.}

To do this, we let $r_0(z)$ denote the $\abs{\phi_0}$-distance from $z$ to the closest zero of $\phi_0$, and
divide the surface $C$ into two regions, as illustrated
in \autoref{fig:surface-crop}:
\begin{itemize}
\item The ``far'' region $C_{\far} = \{z: r_0(z) > \alpha\}$.
In this region we can show that the \ti{integrands} agree
to order $O(\e^{-4 \alpha t^\half})$: indeed,
we show that the difference $\delta$ of the integrands
decays as $\delta = O(\e^{- \gamma t^\half r_0(z)})$ for any
$\gamma < 4$. This part of our analysis contains no big surprises,
and is closely parallel
to the analysis carried out by Mazzeo-Swoboda-Weiss-Witt
in the more general setup of arbitrary $\SL(2)$-Higgs bundles in
\cite{Mazzeo2017}.
(However, because we restrict to the Hitchin section $\cB \subset \cM$, 
our job is somewhat simpler: we only have to deal with scalar PDEs,
and use more-or-less standard techniques.
The specific estimates we use in this part are built on the work of
Minsky in \cite{Minsky92}.)

\item The ``near'' region
$C_{\near} = \{z: r_0(z) \le \alpha\}$.  This region
looks more difficult because our estimates do not show that $\delta$
is close to zero here.  The happy surprise---which was really the
reason for writing this paper---is that when $\alpha < \half M(\phi_0)$,
$\delta$ turns out to be close to an \ti{exact} form that we can control, as follows. 
For any $\alpha < \half M(\phi_0)$, $C_{\near}$ is a disjoint union of disks
$D_i$ centered on the zeros of $\phi_0$.  On each $D_i$ we show
that $\delta = \de \beta_i + O(\e^{-4 \alpha t^\half})$, for a
$1$-form $\beta_i$ which has the same decay property as $\delta$,
namely $\beta_i = O(\e^{- \gamma t^\half r_0(z)})$.  Thus $\beta_i$ is
exponentially small on the boundary of $D_i$, and Stokes's theorem
gives
$\int_{D_i} \delta = \int_{\partial D_i} \beta_i + O(\e^{-4 \alpha
  t^\half}) = O(\e^{- 4 \alpha t^\half})$.
\end{itemize}

Combining these contributions we obtain the desired estimate $\int_{C} \delta = O(\e^{-4 \alpha t^\half})$.

\subsection{Outline}

We carry out the strategy described above as follows.  In
\S\S\ref{sec:background}-\ref{sec:estimate} we set up the background
and notation, and state our main result precisely, as
\autoref{thm:main}. In \S\ref{sec:coordinate-computations} we derive
integral expressions for the restrictions of $g$ and $g^\sf$
to $T \cB$. In \S\S\ref{sec:decay}-\ref{sec:F-estimates} we
develop the main PDE estimates we use to derive exponential decay. In
\S\S\ref{sec:holomorphic}-\ref{sec:exactness} we construct the
$1$-forms $\beta_i$ which we use
in the ``near'' region. In \S\ref{sec:final} we put all
this together to complete the proof of the main theorem.

\subsection{Origin in experiment}

This work was initially inspired by computer experiments (using
programs developed by the authors, and building on work of the first
author and Wolf in \cite{blaschke}) that seemed to show exponential
decay of $g-g^{\sf}$ in certain cases, despite the lack of an
exponentially decaying bound on the integrand near the zeros of
$\phi$.  While these experiments were conducted in a slightly different
setting---namely, meromorphic Higgs bundles on $\C\PP^1$ with a single
pole---all of the essential features and challenges are present in
both cases.  The experimental results therefore suggested that some
``cancellation'' would occur in $C_{\near}$.  Further investigation
of the integrand in this region led to the results of
\S\S\ref{sec:holomorphic}-\ref{sec:exactness} below, and thus to the
main theorem.

This experimental counterpart of this work is ongoing and will be the
subject of a forthcoming paper and software release.

\subsection{Outlook}

It would be very desirable to understand how to extend
\autoref{thm:main} to Higgs bundles of higher rank, say $\SL(N)$-Higgs
bundles.  There is a conjecture very similar to
\eqref{eq:intro-prediction} in that case, but instead of the shortest
saddle connection, it involves the \ti{lightest finite web} as defined
in \cite{Gaiotto2012}. While the analysis of $C_{\far}$ should extend
to this case using methods similar to those of \cite{Mazzeo2017}, 
it is not clear how our approach to $C_{\near}$ should be generalized.

Similarly, one would like to extend \autoref{thm:main} to work on the
full $\cM$ instead of only $\cB \subset \cM$. The analysis of
$C_{\far}$ has already been done on the full $\cM$ in
\cite{Mazzeo2017}, so again the issue is whether the analysis of
$C_{\near}$ can be extended.

In another direction, it would be desirable to improve \autoref{thm:main} to show
that the exponential estimate holds for all $\alpha < M(\phi_0)$
instead of just $\alpha < \half M(\phi_0)$.  However, this might
require a new method; in our computation we meet several different
corrections which are naively of the same order
$\e^{-2 M(\phi_0) t^\half}$; one would need to find some mechanism by
which these different corrections can cancel one another.

\subsection{Acknowledgements}

The authors thank Rafe Mazzeo, Jan Swoboda, Hartmut Weiss, and
Michael Wolf for helpful discussions related to this work, and
also thank the anonymous referee for a careful reading and
helpful comments and corrections. The
authors also gratefully acknowledge support from the U.S.~National
Science Foundation through individual grants DMS 1709877 (DD), DMS
1711692 (AN), and through the GEAR Network (DMS 1107452, 1107263,
1107367, ``RNMS: GEometric structures And Representation varieties'')
which supported a conference where some of this work was conducted.

\section{Background} \label{sec:background}

\subsection{Higgs bundles}

Recall that a \ti{stable $\SL(2)$-Higgs bundle over $C$ of degree zero}
is a pair $(\cE, \varphi)$ where
\begin{itemize}
\item $\cE$ is a rank $2$ holomorphic vector bundle over $C$, equipped
with a trivialization of $\det \cE$,
\item $\varphi$ is a traceless holomorphic section of $\End \cE \otimes K_C$,
\item all $\varphi$-invariant subbundles of $\cE$ have negative degree.
\end{itemize}
There is a (coarse) moduli space $\cM$ parameterizing stable $\SL(2)$-Higgs 
bundles over $C$ of degree
zero \cite{hitchin87a,hitchin87b}.

\subsection{Harmonic metrics}

For each stable $\SL(2)$-Higgs bundle $(\cE, \varphi)$ of degree zero,
it is shown in \cite{hitchin87a} that
there is a distinguished unit-determinant Hermitian metric $h$ on
$\cE$, the \ti{harmonic metric}.  The metric $h$ is determined by
solving an elliptic PDE: letting $D$ denote the Chern connection in
$\cE$, with curvature $F_{D} \in \Omega^2(\fsu(\cE,h))$, and letting
$\Phi = \varphi - \varphi^\dagger \in \Omega^1(\fsu(\cE,h))$ with
$\varphi^\dagger$ the $h$-adjoint of $\varphi$, we require
\begin{equation}
\label{eq:sde}
  F_{D} - \half [\Phi, \Phi] = 0.
\end{equation}
In this equation both $F_D$ and $\Phi$ depend on $h$.

\subsection{The \hk metric}

Now we recall Hitchin's \hk metric $g$ on the moduli space $\cM$.
A beautiful description of this metric was given by Hitchin in 
\cite{hitchin87a} in terms
of an infinite-dimensional \hk quotient. In this paper we
will not use the \hk structure; all we need is a practical
recipe for computing the metric. In this section we review that recipe.

Let $v$ be tangent to an arc in $\cM$, and lift this arc to a family
of Higgs bundles $(\cE_t, \varphi_t)$, equipped with harmonic metrics
$h_t$. Identify all the $(\cE_t, h_t)$ with a fixed $C^\infty$
$\SU(2)$-bundle $E$.  Then we have a family of unitary connections
$D_t$ on $E$ and 1-forms $\Phi_t \in \Omega^1(\fsu(E))$ which for all
$t$ satisfy \eqref{eq:sde}.  For brevity, let $D := D_0$ and
$\Phi := \Phi_0$ denote these objects at $t=0$.  Differentiating at $t=0$
we obtain a pair of $1$-forms
\begin{equation}
(\dot A, \dot \Phi) =   (\left . \partial_t D_t \right
|_{t=0}, \left . \partial_t \Phi_t \right |_{t=0}) \in \Omega^1(\fsu(E))^2.
\end{equation}

Given $\alpha \in \Omega^1(\fsu(E))$ we define a nonnegative density
$\abs{\alpha}^2$ on $C$ by
\begin{equation}
\abs{\alpha_x \de x + \alpha_y \de y}^2 = - \Tr(\alpha_x^2 + \alpha_y^2) \, \de x \de y.
\end{equation}
Here $z = x + \I y$ is a local conformal coordinate on $C$.
In coordinate-independent terms, the density $|\alpha|^2$ corresponds (using the
orientation of $C$) to the $2$-form $-\Tr(\alpha \wedge \star
\alpha)$, where $\star$ denotes the Hodge star operator on $1$-forms.
Now we equip $\Omega^1(\fsu(E))^2$ with the $L^2$ metric
\begin{equation}
  \norm{(\dot A, \dot \Phi)}^2 = \int_C \left (\abs{\dot A}^2  +
  \abs{\dot \Phi}^2 \right ).
\end{equation}
Let $\rho: \Omega^0(\fsu(E))
\to \Omega^1(\fsu(E))^2$ be the linearized gauge map, defined by
\begin{equation}
\label{eq:def-rho}
\rho(X) = (-\de_D X, [X, \Phi] ).
\end{equation}
We consider the orthogonal decomposition of $(\dot
A, \dot \Phi)$ relative to the image of $\rho$,
\begin{equation}
\label{eq:orthog-decomp}
 (\dot A, \dot \Phi) = (\dot A, \dot \Phi)^\parallel + (\dot A, \dot
\Phi)^\perp
\end{equation}
with $(\dot A, \dot \Phi)^\parallel \in \rho(\Omega^0(\fsu(E))$ and $
(\dot A, \dot \Phi)^\perp \in \rho(\Omega^0(\fsu(E))^\perp$.
Hitchin's \hk metric $g$ is
\begin{equation}
 g(v,v) = \norm{(\dot A, \dot \Phi)^\perp}^2.
\end{equation}

\subsection{The Hitchin section}

Fix a spin structure on the compact Riemann surface $C$.
The spin structure determines a holomorphic line bundle $\cL$ equipped
with an isomorphism $\cL^2 \simeq K_C$, and thus a rank $2$ holomorphic
vector bundle
\begin{equation}
  \cE = \cL \oplus \cL^{-1}.
\end{equation}
This bundle has $\det \cE = \cL \otimes \cL^{-1}$ which is canonically trivial.
Let $\cB$ be the space of holomorphic quadratic differentials on $C$,
\begin{equation}
 \cB = H^0(C, K_C^2).
\end{equation}
For each $\phi \in \cB$ there is a corresponding Higgs field,
\begin{equation}
  \varphi = \begin{pmatrix} 0 & - \phi \\ 1 & 0 \end{pmatrix} \in H^0(C, \End \cE \otimes K_C).
\end{equation}
The Higgs bundles $(\cE, \varphi)$ are all stable, and thus determine
a map $\iota: \cB \to \cM$. The image $\iota(\cB) \subset \cM$ is an
embedded submanifold, the \ti{Hitchin section}.\footnote{More precisely there are $2^{2 \times {\mathrm {genus}}(C)}$ Hitchin sections, corresponding
to the equivalence classes of spin structures on $C$. All of our discussion
applies to any of them.} Moreover, $\iota$ is a holomorphic map,
with respect to the complex structure on $\cM$ induced from its
realization as moduli space of Higgs bundles (which is the complex structure
denoted $I$ in \cite{hitchin87a}). Thus $\iota(\cB)$ is a complex
submanifold of $\cM$. From now on, by abuse of notation, 
we identify $\cB$ with $\iota(\cB)$.

Our interest in this paper is in the restriction of the \hk metric 
$g$ from the full $T \cM$ to $T \cB$.
This restriction is a \kahler metric on $\cB$, which we will
also denote $g$.

\section{Metric estimate} \label{sec:estimate}

\subsection{The semiflat metric} \label{sec:semiflat}

Let $\cB' \subset \cB$ be the locus of quadratic differentials with
only simple zeros, which is an open and dense set.  On $\cB'$ we
define an explicit \kahler metric $g^\sf$ as follows.  A tangent
vector to $\cB$ can be represented by a quadratic differential
$\dot \phi$. We define
\begin{equation} \label{eq:gsf}
  g^\sf_{\phi}(\dot\phi, \dot\phi) = 2 \int_C \frac{\,\,\abs{\dot\phi}^2}{\abs{\phi}}.
\end{equation}
Note that the integrand on the right hand side is a smooth density on
$C \setminus \phi^{-1}(0)$.  The condition that $\phi \in \cB'$
implies that this integral is convergent.

We remark that $g^\sf$ is a ``(rigid) special \kahler'' metric on
$\cB'$ in the sense of \cite{Freed:1997dp}. It does not extend to a Riemannian
metric on the full $\cB$.

\subsection{Threshold and radius} \label{sec:threshold}
Any nonzero quadratic differential $\phi \in \cB$ induces a
flat metric $\abs{\phi}$ on $C$, which is smooth except for conical
singularities at the zeros of $\phi$.  From now on we always use this
metric to define geodesics and lengths on $C$, unless a different
metric is explicitly referenced.  A \ti{saddle
connection} of $\phi$ 
is a geodesic segment on $C$ which begins and ends on zeros of
$\phi$ (not necessarily two distinct zeros), and which has no zeros
of $\phi$ in its interior. 

We define the \ti{threshold} $M: \cB \to \R_{\ge 0}$ by
\begin{equation}
	M(\phi) = \begin{cases} \text{the minimum length of a saddle connection of $\phi$} & \text { for } \phi \in \cB', \\
	0 & \text{ for } \phi \in \cB \setminus \cB'. \end{cases}
\end{equation}
Then $M$ is continuous and has the homogeneity property
\begin{equation}
M(t \phi) = t^{\half} M(\phi), \qquad t \in \R_+.
\end{equation}

The threshold measures the distance ``between zeros'' of $\phi$
(including the possibility of a segment between a zero and itself).
In what follows it will also be important to consider the distance
from an arbitrary point to the zeros of $\phi$.  We define the
\emph{radius} function $r : C \to \R$ of $\phi$ by
\begin{equation} r(z) = d(z,\phi^{-1}(0)). \end{equation}
The main technical estimates that are used in the proof of
\autoref{thm:main} are all phrased in terms of bounds on various functions
on $C$ in terms of the radius.

\subsection{The estimate}
Now we can state the main result of this paper:
\begin{thm}
\label{thm:main}
If $\phi_0 \in \cB'$, and $\dot\phi$ is any holomorphic quadratic
differential on $C$, then for any $\alpha < \half M(\phi_0)$ we have
\begin{equation}
\label{eqn:main}
\abs{(g_{t\phi_0} - g_{t\phi_0}^\sf)(\dot\phi,\dot\phi)}
 = O(\e^{- 4 \alpha \,t^{\half}} \|\dot\phi\|^2 )
\end{equation}
 as $t \to \infty$, where $\| \cdot \|$ denotes any norm on the vector
 space $\cB$.  Having fixed such a norm, the implicit multiplicative
 constant in \eqref{eqn:main} can be taken to depend only on
 $\alpha$, $M(\phi_0)$, and the genus of $C$.
\end{thm}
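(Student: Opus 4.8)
The plan is to follow the strategy outlined in \S\ref{sec:strategy} very literally, reducing the theorem to a single integral estimate and then splitting the surface into the near and far regions determined by the threshold. First I would invoke the integral expressions for $g_{t\phi_0}(\dot\phi,\dot\phi)$ and $g^\sf_{t\phi_0}(\dot\phi,\dot\phi)$ derived in \S\ref{sec:coordinate-computations} (equations \eqref{eq:l2norm-combined} and \eqref{eq:l2sfnorm-combined}), so that the left-hand side of \eqref{eqn:main} becomes $\abs{\int_C \delta}$ for an explicit difference-of-integrands density $\delta$ on $C$. The whole problem is then to bound this integral by $O(\e^{-4\alpha t^\half}\norm{\dot\phi}^2)$. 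Because the norm on $\cB$ is finite-dimensional, all norms are equivalent, so I may assume $\norm{\dot\phi}=1$ and absorb the $\norm{\dot\phi}^2$ factor at the end; this also reduces the claim about the implicit constant to tracking how the estimates depend on $\alpha$, $M(\phi_0)$, and $\genus(C)$.

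Next I would treat the far region $C_\far=\{r_0(z)>\alpha\}$, where $r_0$ is the $\abs{\phi_0}$-distance to the zero set. Here I expect to prove a pointwise bound $\delta=O(\e^{-\gamma t^\half r_0(z)})$ for any $\gamma<4$, using the PDE estimates of \S\S\ref{sec:decay}-\ref{sec:F-estimates}, which control the solutions of the two scalar elliptic equations in terms of the radius (these build on Minsky's estimates \cite{Minsky92}). Since $r_0>\alpha$ throughout $C_\far$, integrating this pointwise bound against the $\abs{\phi_0}$-area and using that the tail $\int_{C_\far}\e^{-\gamma t^\half r_0}$ is controlled by its value at $r_0=\alpha$ would give $\int_{C_\far}\delta=O(\e^{-4\alpha t^\half})$ after choosing $\gamma$ close enough to $4$ and keeping a little room in the exponent to absorb the polynomial-in-$t$ volume factor. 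I expect this part to be routine given the cited estimates.

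The near region $C_\near=\{r_0(z)\le\alpha\}$ is the crux, and here the pointwise bound on $\delta$ is useless since $r_0$ can be zero. The key observation, valid precisely because $\alpha<\half M(\phi_0)$, is that $C_\near$ is a disjoint union of embedded disks $D_i$, one around each simple zero of $\phi_0$; a saddle connection shorter than $2\alpha$ would be needed for two such disks to collide, and none exists. On each $D_i$ I would use the structural results of \S\S\ref{sec:holomorphic}-\ref{sec:exactness} to write $\delta=\de\beta_i+O(\e^{-4\alpha t^\half})$ for a $1$-form $\beta_i$ that inherits the decay $\beta_i=O(\e^{-\gamma t^\half r_0(z)})$. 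Then Stokes's theorem converts $\int_{D_i}\de\beta_i$ into $\int_{\bdry D_i}\beta_i$, and since $r_0=\alpha$ on $\bdry D_i$ the boundary integral is $O(\e^{-\gamma\alpha t^\half})$ times the length of $\bdry D_i$, hence $O(\e^{-4\alpha t^\half})$ after again choosing $\gamma$ close to $4$. Summing over the finitely many disks (their number is bounded in terms of $\genus(C)$ via Riemann-Roch, since $\phi_0$ has $4\genus(C)-4$ simple zeros) contributes only a genus-dependent factor.

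I anticipate the main obstacle lies in the near region, specifically in establishing the near-exactness $\delta=\de\beta_i+O(\e^{-4\alpha t^\half})$ with a $\beta_i$ that decays like $\delta$ itself. This is the ``happy surprise'' flagged in \S\ref{sec:strategy}: a naive analysis only shows $\delta$ is exponentially small away from the zero, not near it, so one must exhibit genuine cancellation by producing an explicit primitive. I would expect this to require understanding the local model of the solutions near a simple zero (the relevant scalar PDEs degenerate to a model problem governed by $\abs{\phi_0}^\half$), identifying the leading non-decaying part of $\delta$ as an exact form, and verifying that the primitive $\beta_i$ one writes down retains the radial exponential decay. Assembling the three contributions $\int_{C_\far}\delta$, $\sum_i\int_{\bdry D_i}\beta_i$, and the error terms yields $\int_C\delta=O(\e^{-4\alpha t^\half})$, completing the proof; the final bookkeeping of the implicit constant follows by noting that every estimate above depends only on $\alpha$, $M(\phi_0)$, and $\genus(C)$.
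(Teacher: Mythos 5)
Your overall architecture---reduce to $\int_C \delta$ via the coordinate formulas, split $C$ into $C_\far$ and the disks $D_i$, use pointwise exponential decay of the integrand in the far region, and use Stokes's theorem on the disks---is the same as the paper's, and the far-region half is essentially right. The genuine gap is in the near region, at precisely the point you flag as the crux but then describe incorrectly. \autoref{lem:exactness} gives exactness of $\delta(\phi,\dot\phi,u,F_X)$ only for the \emph{locally constructed} complex variation $F_X=\chi_z+2\chi u_z$ attached to the holomorphic vector field $X$ with $\cL_X\phi=\dot\phi$ on $D_i$; it says nothing about $\delta(\phi,\dot\phi,u,F)$ for the actual global solution $F=F(\phi,\dot\phi)$ of \eqref{eq:diff-F}, which is what you must integrate. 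The missing idea is how to control $\mu=F_X-F(\phi,\dot\phi)$ on all of $D_i$, including arbitrarily close to the zero where no radial decay estimate is available. The paper's mechanism is: by \autoref{thm:holo-variation}, $F_X$ and $F(\phi,\dot\phi)$ solve the \emph{same} inhomogeneous linear equation on $D_i$, so $\mu$ solves the associated homogeneous equation $(\Delta-k)\mu=0$ with $k>0$; hence $\abs{\mu}$ has no interior maximum, and the bound $\mu=O(\norm{\dot\phi}_\sigma\e^{-\gamma R})$ on $\partial D_i$ (obtained from \autoref{thm:u-exp} and \autoref{thm:f-exp}, since $\partial D_i$ lies deep in the far regime) propagates to the whole disk. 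Then $\delta(\phi,\dot\phi,u,F)=\delta(\phi,\dot\phi,u,F_X)+4\e^{-2u}\re(\mu P\bar{\dot{P}})\,\de x\wedge\de y$, the correction integrates to $O(\norm{\dot\phi}_\sigma^2\e^{-\gamma R})$, and the exact term is handled by Stokes. Your proposed substitute---analyzing a degenerate local model of the PDE near the simple zero and extracting the ``leading non-decaying part'' of $\delta$ as an exact form---is not how the cancellation arises and would likely be much harder; the point of the argument is that no analysis at the zero itself is needed once the maximum principle is in play.

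A secondary, fixable issue is the exponent bookkeeping. If the disks have $\abs{\phi_0}$-radius exactly $\alpha$ and all your decay rates are $\gamma<4$ strictly, then both the far-region bound and the boundary term give only $O(\e^{-\gamma\alpha t^{1/2}})$, which is \emph{weaker} than $O(\e^{-4\alpha t^{1/2}})$ no matter how close $\gamma$ is to $4$. The paper instead takes the disks of the maximal admissible $\abs{\phi_0}$-radius $\half M(\phi_0)$ and then sets $\gamma=8\alpha/M(\phi_0)<4$, so that $\gamma R=4\alpha t^{1/2}$ exactly; equivalently, you could run your argument with some radius $\alpha'$ strictly between $\alpha$ and $\half M(\phi_0)$. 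As written, ``choose $\gamma$ close to $4$'' does not close this.
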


\section{Coordinate computations} \label{sec:coordinate-computations}

\subsection{Self-duality equation and variation in coordinates}
To set the stage for the proof of \autoref{thm:main} we start by
deriving local coordinate expressions for the self-duality equation
\eqref{eq:sde} at a point $\phi \in \cB$, and for its first
variation in the direction of $(\dot A, \dot \Phi)$ representing
$\dot{\phi} \in T_\phi \cB$.

In a local conformal coordinate $z = x + \I y$ on $C$ we write
$\phi = P(z) \, \de z^2$ for a holomorphic function $P$.  Let
$\de z^{\half}$ denote a local section of $\cL$ satisfying
$\de z^{\half} \tensor \de z^\half = \de z$; there are two such local
sections, the choice of which will not matter in the sequel.  Using
the local trivialization of $\mathcal{E} = \cL \oplus \cL^{-1}$ given
by the frame $(\de z^{\half}, \de z^{-\half})$, which we call the
\ti{holomorphic gauge}, we can write
\begin{equation}
\label{eq:holo-gauge}
  \varphi = \begin{pmatrix} 0 & -P \\ 1 & 0 \end{pmatrix} \de z,
  \qquad h = \begin{pmatrix} \e^{-u} & 0 \\ 0 & \e^u \end{pmatrix},
 \qquad \varphi^{\dagger} = \begin{pmatrix} 0 & \e^{2u}  \\ -\e^{-2u} \overline{P} & 0 \end{pmatrix} \de \overline{z},
\end{equation}
\begin{equation} \label{eq:holo-gauge-2}
D = \de + A, \qquad A = \begin{pmatrix} -\partial u & 0 \\ 0 & \partial u \end{pmatrix}.
\end{equation}
This diagonal form for $h$ reflects that the splitting $\cL
\oplus \cL^{-1}$ is orthogonal for the harmonic metric in this case
\cite[Theorem~11.2]{hitchin87a}.

Then \eqref{eq:sde} reduces to a scalar equation for $u$,
\begin{equation} \label{eq:diff-u}
\Delta u - 4(\e^{2u}  - \e^{-2u}\abs{P}^2 ) = 0,
\end{equation}
where $\Delta = 4 \partial \bar{\partial}$ is the flat Laplacian.

In more invariant terms, \eqref{eq:diff-u} is an equation for the
globally defined metric $\e^{2u} |\de z|^2$ on $C$.  For Higgs bundles
of this type, the Hermitian metric $h$, the K\"ahler metric
$\e^{2u} |\de z|^2$, and the (local) scalar function $u$ all contain
equivalent information.  In most of what follows we work with $u$, which
unlike $h$ and $\e^{2u} |\de z|^2$ is a coordinate-dependent quantity:
Under a conformal change of coordinates $z \mapsto w$ it transforms as
$u \mapsto u - \log \left | \frac{\de w}{\de z} \right |$.  We refer to
objects with this transformation property as \emph{log densities}.  Note
that the difference of two log densities is a function.  Also, if
$\phi = P \,\de z^2$ is a quadratic differential, then $\half \log |P|$ is
a log density.

When considering the density $u$ on $C$ which corresponds to the unique
harmonic metric on the Higgs bundle associated to $\phi \in \cB$, we
sometimes write $u(\phi)$ to emphasize its dependence on $\phi$, and to
distinguish it from other local solutions to \eqref{eq:diff-u} on
domains in $C$ or in the plane that we consider.

Next, we consider a variation $\dot{\phi} \in T_\phi \cB$ expressed
locally as $\dot \phi = \dot P(z) \, \de z^2$.
Differentiating \eqref{eq:diff-u} we find that the corresponding
first order variation $\dot u$, describing the infinitesimal change in $h$,
satisfies the inhomogeneous linear equation
\begin{equation} \label{eq:diff-udot}
 \Delta \dot u - 8 \dot u (\e^{2u} + \e^{-2u} \abs{P}^2) + 8 \e^{-2u} \re(P \dot {\overline {P}}) = 0.
\end{equation}
Unlike $u$, $\dot u$ is a well-defined global function on $C$
(independent of the coordinate $z$).
Since the operator $\Delta - 8(\e^{2u} + \e^{-2u}\abs{P}^2)$ is
negative definite, \eqref{eq:diff-udot} uniquely determines $\dot u$.

\subsection{Unitary gauge}
In preparation for calculating the $L^2$ inner product of variations
it is more convenient to work in \ti{unitary gauge}, expressing the
Higgs field and connection relative to the frame $(\e^{\half u}
\de z^{\half}, \e^{- \half u} \de z^{-\half})$; then
\eqref{eq:holo-gauge}-\eqref{eq:holo-gauge-2} become
\begin{equation}
\label{eq:unitary-gauge}
\varphi = \begin{pmatrix} 0 & -\e^{-u} P \\ \e^u & 0 \end{pmatrix} \de z, \qquad \varphi^\dagger = \begin{pmatrix} 0 & \e^u \\ -\e^{-u} \overline{P} & 0 \end{pmatrix} \de \overline{z}, \quad A = \frac{\I}{2}  \begin{pmatrix} \star \de u & 0 \\ 0 & - \star \de u \end{pmatrix},
\end{equation}
with infinitesimal variations given by
\begin{subequations} \label{eq:unitary-gauge-dot}
\begin{gather}
\dot \varphi = \begin{pmatrix} 0 & \e^{-u} P \dot{u} - \e^{-u} \dot{P} \\ \e^u \dot{u} & 0 \end{pmatrix} \de z, \qquad \dot \varphi^\dagger = \begin{pmatrix} 0 & \e^u \dot u \\ \e^{-u} \overline{P} \dot u - \e^{-u} \dot{\overline{P}} & 0 \end{pmatrix} \de \overline{z}, \\ \dot A = \frac{\I}{2} \begin{pmatrix} \star \de \dot u & 0 \\ 0 & - \star \de \dot u \end{pmatrix},
\end{gather}
\end{subequations}
which of course gives a corresponding expression for $\dot{\Phi} =
\dot \varphi - \dot \varphi^\dagger$.

\subsection{Orthogonal decomposition}
Let $(A,\Phi)$ be obtained from a solution of the self-duality equation \eqref{eq:sde}.
Define the linear map
$\mu = \mu_{(A,\Phi)} : \Omega^1(\fsu(\cE,h))^2 \to
\Omega^2(\fsu(\cE,h))$ by
\begin{equation} \mu(\dot{A}, \dot{\Phi}) = \de_D \star \dot A - [\dot\Phi, \star
\Phi].\end{equation}
A variation $(\dot A, \dot \Phi)$ is $L^2$-orthogonal to the image of
the linearized gauge map $\rho$ if and only if it satisfies
$\mu(\dot{A}, \dot{\Phi}) = 0$. We say that such a variation is \ti{in
  gauge}.

For a general variation $(\dot A, \dot \Phi)$, the
orthogonal decomposition of \eqref{eq:orthog-decomp} is given by
\begin{equation} (\dot A, \dot \Phi)^\perp = (\dot A, \dot \Phi) - \rho(X)\end{equation}
where $X \in \Omega^0(\fsu(E))$ satisfies $\mu(\rho(X)) =
\mu(\dot{A}, \dot{\Phi})$.

For the specific variation obtained in \eqref{eq:unitary-gauge-dot}
we find that $\de_D \star \dot{A} = 0$, and a straightforward calculation yields
\begin{equation}
\begin{split}
Q := \mu(\dot A, \dot \Phi) &= -[\dot \Phi, \star \Phi]\\
& = -2
([\varphi_z, \dot\varphi^\dagger_{\bar z}] + [\varphi^\dagger_{\bar
  z}, \dot\varphi_z]) \de x \de y\\
&= -2 \e^{-2u} (P \dot{\overline{P}} - \overline{P} \dot{P}) \begin{pmatrix} 1 & 0 \\ 0 & -1 \end{pmatrix} \de x \de y.
\end{split}
\end{equation}
The computation of $(\dot A, \dot \Phi)^\perp$ therefore reduces to
solving
\begin{equation}
\mu(\rho(X)) = - \de_D \star \de_D X - [ [ X,\Phi], \star \Phi] =
Q
\label{eq:make-in-gauge}
\end{equation}
for $X$. Equation \eqref{eq:make-in-gauge} implies in particular that $X$ is diagonal and traceless;
thus we may write
\begin{equation}
\label{eq:def-X}
  X = \half \I v \begin{pmatrix} 1 & 0 \\ 0 & -1 \end{pmatrix}.
\end{equation}
After so doing, \eqref{eq:make-in-gauge} becomes a scalar equation for $v$,
\begin{equation} \label{eq:diff-v}
\Delta v - 8v(\e^{2 u} + \e^{-2u} \abs{P}^2) + 8 \e^{-2u} \im (P \dot{\overline{P}} ) = 0.
\end{equation}
We note the striking similarity between \eqref{eq:diff-v} and
\eqref{eq:diff-udot}; in fact, replacing $\dot{P} \to \I \dot{P}$
and $v \to -\dot{u}$ in \eqref{eq:diff-v} gives exactly
\eqref{eq:diff-udot}.  This suggests that we combine $\dot{u}$ (the
metric variation) and $v$ (the infinitesimal gauge transformation to
put the tangent vector in gauge) into the single complex function
\begin{equation}
F = \dot{u} - \I v,
\end{equation}
which we call the \emph{complex variation}, which then satisfies the
inhomogeneous linear equation
\begin{equation} \label{eq:diff-F}
\left(\Delta - 8(\e^{2 u} + \e^{-2u} \abs{P}^2)\right) F + 8 \e^{-2u} \overline{P} \dot{{P}} = 0.
\end{equation}
As with \eqref{eq:diff-udot} above, when working on the entire compact
surface $C$ the equation \eqref{eq:diff-F} uniquely determines the
complex function $F$. We write $F(\phi,\dot{\phi})$ for this unique
global solution determined by $(\phi,\dot{\phi}) \in T_\phi \cB$ when it
is necessary to distinguish it from other local solutions of the same
equation.

\subsection{Calculating the norm}

Using the calculations above we can now determine an explicit integral
expression for $g_\phi(\dot{\phi},\dot{\phi})$ in terms of $P$,
$\dot{P}$, $u$, and $F$.

The first step is to calculate $\rho(X)$ in unitary gauge.  We find $\rho(X) = (B,\Psi)$ where
\begin{equation}
\label{eq:rho-X}
 B =\frac{\I}{2} \begin{pmatrix} \de v & 0 \\ 0 & -\de
v \end{pmatrix}, \qquad
\Psi = \psi - \psi^\dagger, \qquad
\psi =  \I v \begin{pmatrix} 0 & \e^{-u} P \\ \e^u & 0 \end{pmatrix}
\de z.
\end{equation}
Now $(\dot A , \dot \Phi)^\perp = (\dot A, \dot \Phi) - (B,\Psi)$ is
orthogonal to $(B,\Psi)$, hence the \hk norm of the associated tangent
vector to the moduli space $\cM$ is
\begin{equation}
g_\phi(\dot{\phi}, \dot{\phi}) = \| (\dot A, \dot \Phi)^\perp
\|^2 = \|(\dot A, \dot \Phi)\|^2 - \|(B, \Psi)\|^2.
\end{equation}
Now we need only to substitute the expressions for $(\dot A,\dot \Phi)$
from \eqref{eq:unitary-gauge-dot} and $(B,\Psi)$ from \eqref{eq:rho-X}
and simplify. Two observations will be useful in doing this.  First,
if $\Xi = \xi - \xi^\dagger \in \Omega^1(\fsu(\cE))$ where $\xi$ is
expressed in unitary gauge as $\xi = f(z) \de z$ for $f$ a
matrix-valued function, then
\begin{equation}
\label{eq:off-diag-local}
 |\Xi|^2 = 4 \tr(f \bar f^t) \, \de x \de y.
\end{equation}
Second, if $\beta = \begin{pmatrix}
\theta & 0\\
0 & -\theta 
\end{pmatrix}$, with $\theta$ a scalar $1$-form, then we have 
\begin{equation}
\label{eq:on-diag-local}
|\beta|^2 = | \!\star\!\beta |^2 = 2 \theta \wedge \star \theta.
\end{equation}
Using \eqref{eq:off-diag-local} to simplify $|\dot \Phi|^2$ and
\eqref{eq:on-diag-local} to simplify $|\dot A|^2$, we find%
\footnote{Abusing notation, we often write integrals over $C$
  with the integrand expressed in a \emph{local} coordinate and frame for
  $\cE$.}
\begin{equation} \label{eq:part-norm}
 \|(\dot A, \dot \Phi)\|^2 = \int_C |\dot A|^2 + |\dot\Phi|^2 =
\frac12 \norm{\de \dot u}^2 + \int_C 4 \left(  \e^{2u} \dot u^2 + \e^{-2u} \abs{P \dot{u} -
  \dot{P}}^2 \right) \de x \de y,
\end{equation}
where $\norm{\theta}^2 = \int_C \theta \wedge \star
\theta$. Proceeding similarly for
$\|(B,\Psi)\|^2$ using \eqref{eq:rho-X}, we have
\begin{equation} \label{eq:gauge-part}
 \|(B,\Psi)\|^2 = \half \| \de v \|^2 + \int_C \left( 4 (\e^{2u} + \e^{-2u} \abs{P}^2) v^2 \right) \de x \de y.
\end{equation}
Subtracting \eqref{eq:gauge-part} from \eqref{eq:part-norm} we obtain
\begin{equation}
g_\phi(\dot{\phi}, \dot{\phi}) = \int_C \left( 4 \e^{2u} \dot u^2 + 4 \e^{-2u} \abs{P \dot{u} - \dot{P}}^2 - 4 (\e^{2u} + \e^{-2u} \abs{P}^2) v^2\right) \de x \de y + \frac12 \norm{\de \dot u}^2  - \half \norm{\de v}^2.
\end{equation}
Next we integrate by parts on $C$ to replace $\norm{\de \dot u}^2$ and $\norm{\de v}^2$
by $- \int_C (\dot u \Delta \dot u) \de x \de y$ and $- \int_C (v \Delta v) \de x \de y$ respectively,
and substitute for $\Delta \dot u$ and $\Delta v$ using the differential equations \eqref{eq:diff-u}
and \eqref{eq:diff-v}.
A few terms cancel and we are left with
\begin{equation}
g_\phi(\dot \phi, \dot \phi) = \int_C \left( 4 \e^{-2 u} \abs{\dot P}^2 - 4 \e^{-2u} \dot u \re (P \dot{\overline{P}}) - 4 \e^{-2u} v \im (P \dot{\overline P}) \right) \de x \de y
\end{equation}
or more compactly,
\begin{equation}
\label{eq:l2norm-combined}
g_\phi(\dot{\phi}, \dot{\phi}) = \int_C 4 \e^{-2u} \left(\abs{\dot P}^2 - \re (F P \dot{\overline{P}})\right) \de x \de y.
\end{equation}
As a reassuring consistency check, 
note that $g_\phi$ is indeed a \ti{Hermitian} metric, i.e.
$g_\phi(\I \dot\phi, \I \dot\phi) = g_\phi(\dot\phi, \dot\phi)$:
one sees this easily from \eqref{eq:l2norm-combined}, since changing
$\dot\phi \to \I \dot\phi$ leads to $F \to \I F$ and $\dot P \to \I \dot P$.
The same is not true of \eqref{eq:part-norm} by itself: it holds only
once we subtract the pure gauge part \eqref{eq:gauge-part}.

To sum up the results of this section, and restate the formula
\eqref{eq:gsf} for $g^{\sf}$ in the same local coordinates, we have:
\begin{thm}
\label{thm:g-integral}
For any quadratic differential $\phi \in \cB'$ and tangent vector
$\dot{\phi} \in T_\phi \cB' = \cB$ with respective coordinate
expressions $\phi = P(z) \, \de z^2$ and $\dot \phi = \dot P(z) \, \de z^2$,
the norm of $\dot\phi$ in the \hk metric $g$ is given by
\eqref{eq:l2norm-combined}, where $u$ and $F$ are the solutions
of \eqref{eq:diff-u} and \eqref{eq:diff-F}.  The norm of the same tangent vector 
in the semiflat metric $g^\sf$ is
\begin{equation}
\label{eq:l2sfnorm-combined}
g_\phi^{\sf}(\dot{\phi}, \dot{\phi}) = \int_C 2 |P|^{-1} \abs{\dot
  P}^2 \, \de x \de y.
\end{equation}
\noproof
\end{thm}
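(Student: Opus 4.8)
The statement packages two essentially independent assertions, and I would prove them separately. The formula \eqref{eq:l2norm-combined} for the \hk metric $g$ is precisely the endpoint of the computation developed throughout this section, so the plan is to organize that computation into a self-contained argument. The formula \eqref{eq:l2sfnorm-combined} for $g^\sf$, on the other hand, is a direct coordinate rendering of the definition \eqref{eq:gsf} and requires only a short density calculation.

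For the $g$ formula I would begin from Hitchin's recipe: represent $\dot\phi$ by the variation $(\dot A, \dot\Phi)$ written in unitary gauge in \eqref{eq:unitary-gauge-dot}, and use the characterization $(\dot A,\dot\Phi)^\perp = (\dot A,\dot\Phi) - \rho(X)$, where $X$ is determined by $\mu(\rho(X)) = \mu(\dot A,\dot\Phi)$. First I would check that $\de_D\!\star\!\dot A = 0$, so that $Q = \mu(\dot A,\dot\Phi) = -[\dot\Phi,\star\Phi]$ is diagonal and traceless; this forces $X$ to have the form \eqref{eq:def-X}, and the projection equation collapses to the single scalar PDE \eqref{eq:diff-v} for $v$. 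Since $\rho(X) = (B,\Psi)$ is by construction $L^2$-orthogonal to $(\dot A,\dot\Phi)^\perp$, the Pythagorean identity reduces the problem to evaluating $g_\phi(\dot\phi,\dot\phi) = \|(\dot A,\dot\Phi)\|^2 - \|(B,\Psi)\|^2$.

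The heart of the argument is then the evaluation of these two $L^2$ norms. Using \eqref{eq:off-diag-local} for the off-diagonal Higgs contributions and \eqref{eq:on-diag-local} for the diagonal connection contributions, I would obtain $\|(\dot A,\dot\Phi)\|^2$ and $\|(B,\Psi)\|^2$ as in \eqref{eq:part-norm} and \eqref{eq:gauge-part}. Subtracting them, I would integrate by parts on the closed surface $C$ to replace $\norm{\de\dot u}^2$ and $\norm{\de v}^2$ by $-\int_C \dot u\,\Delta\dot u$ and $-\int_C v\,\Delta v$, and then substitute for $\Delta\dot u$ and $\Delta v$ from their governing equations \eqref{eq:diff-udot} and \eqref{eq:diff-v}. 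After the cancellations this leaves the cross terms in $\re(P\dot{\overline{P}})$ and $\im(P\dot{\overline{P}})$, which recombine through $F = \dot u - \I v$ into the single term $\re(FP\dot{\overline{P}})$, yielding \eqref{eq:l2norm-combined}. For the $g^\sf$ formula I would simply note that $\dot\phi/\phi = \dot P/P$ is a genuine function, so $\abs{\dot\phi}^2/\abs{\phi} = \abs{\dot P/P}^2\,\abs{\phi} = \abs{P}^{-1}\abs{\dot P}^2\,\de x\,\de y$, where $\abs{\phi} = \abs{P}\,\de x\,\de y$ is the area density of the flat metric; substituting into \eqref{eq:gsf} gives \eqref{eq:l2sfnorm-combined}.

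I expect the main obstacle to be the projection-and-cancellation bookkeeping in the $g$ computation: one must verify that $\dot A$ drops out of $\mu$, that the projection genuinely reduces to one scalar equation, and that after substituting the PDEs the several real-analytic pieces coming from the metric variation $\dot u$ and the gauge correction $v$ reorganize into the clean complex combination $F$. A useful consistency check on these cancellations is the Hermitian symmetry of \eqref{eq:l2norm-combined}: replacing $\dot\phi$ by $\I\dot\phi$ sends $\dot P\mapsto\I\dot P$ and $F\mapsto\I F$, leaving the integrand unchanged, exactly as a \kahler metric requires---whereas the intermediate expression \eqref{eq:part-norm} does not have this symmetry on its own. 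By contrast, the $g^\sf$ assertion is routine density bookkeeping and poses no difficulty.
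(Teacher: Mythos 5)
Your proposal is correct and follows essentially the same route as the paper: the theorem is stated with no separate proof precisely because it summarizes the computation of \S\ref{sec:coordinate-computations}, and your outline (unitary gauge, reduction of the gauge-fixing to the scalar equation \eqref{eq:diff-v}, Pythagorean subtraction of \eqref{eq:gauge-part} from \eqref{eq:part-norm}, integration by parts and substitution of the PDEs, recombination into $F$) reproduces that computation, while the $g^\sf$ formula is indeed just the coordinate form of \eqref{eq:gsf}. No discrepancies to report.
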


The goal of the next three sections is to gain some control over the
integral expressions \eqref{eq:l2norm-combined} and
\eqref{eq:l2sfnorm-combined} by studying the behavior of the functions
$u$ and $F$.  We will see that these functions are
well-approximated by
\begin{equation}
\begin{split}
u &\approx \half \log \abs{P},\\
F &\approx \half \frac{\dot P}{P},
\end{split}
\end{equation}
at points that are not too close to the zeros of $\phi$.  It is easy to
check that substituting these approximations directly into
\eqref{eq:l2norm-combined} yields exactly the semiflat integral
\eqref{eq:l2sfnorm-combined}.  Bounding the difference $g - g^\sf$ thus
reduces to understanding the error in the approximations.

\section{Exponential decay principle}
\label{sec:decay}

We now develop a criterion for solutions to certain elliptic PDE on
regions in the plane to decay exponentially fast as we move away from
the boundary of the region.  The method is standard---combining the
maximum principle with the known behavior of the eigenfunctions of the
Laplacian---and the results in this section are surely not new.
A similar method was used in \cite{Minsky92}, for example, to derive the
exponential decay results for \eqref{eq:diff-u} that we will generalize in
\S \ref{sec:u-estimates}.

\begin{thm}
\label{thm:exp-decay}
Let $\Omega = \{ |z| < R \}$ be a disk in $\C$, and for $z \in \Omega$
let $\rho(z) = d(z, \partial \Omega) = R-|z|$ denote the distance to
the boundary of this disk.  Suppose that
$w \in C^2(\Omega) \cap C^0(\bar{\Omega})$ satisfies
\begin{equation}	\label{eq:w-eq}
(\Delta - k^2) w = g
\end{equation}
where $k,g \in C^0(\bar{\Omega})$, $k \geq 4$, and suppose that for every
$\gamma < 4$ there exists a constant $A(\gamma)$ such that $g$ obeys
the exponential decay condition
\begin{equation}
|g| < A(\gamma) \e^{- \gamma \rho}.
\end{equation}
Then, for any $\gamma < 4$, there exist constants $K(\gamma)$ and
$A'(\gamma)$, such that $w$ obeys the exponential decay condition
\begin{equation}
\label{eq:exp-decay-lemma-target}
|w| < K(\gamma) (M+A'(\gamma)) \e^{- \gamma \rho},
\end{equation}
where $M = \sup_{\bdry \Omega} |w|$.
Moreover, given any $\gamma' > \gamma$, $A'(\gamma)$ can be chosen to be equal
to $A(\gamma')$.
\end{thm}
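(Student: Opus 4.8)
The plan is to bound $|w|$ by a radial \emph{barrier} (comparison function) built from Bessel functions and to conclude via the maximum principle for $L = \Delta - k^2$, whose zeroth-order coefficient $-k^2 \le -16$ is strictly negative. Concretely, it suffices to construct a nonnegative $\Phi \in C^2(\Omega)\cap C^0(\bar\Omega)$ satisfying $(\Delta - k^2)\Phi \le -|g|$ in $\Omega$, with $\Phi \ge M$ on $\bdry\Omega$, and with $\Phi \le K(\gamma)(M + A'(\gamma))\e^{-\gamma\rho}$. Indeed, the first condition gives $(\Delta - k^2)(\Phi \mp w) \le -|g| \pm g \le 0$, so $\Phi \mp w$ are supersolutions of $L$ that are $\ge 0$ on $\bdry\Omega$; a supersolution of $L$ cannot attain a negative interior minimum, since at such a point $\Delta(\Phi\mp w)\ge 0$ while $k^2(\Phi\mp w) < 0$. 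Hence $|w| \le \Phi$, which is exactly \eqref{eq:exp-decay-lemma-target}.

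First I would reduce to the constant-coefficient operator $\Delta - 16$: because $\Phi \ge 0$ and $k \ge 4$, any $\Phi$ with $(\Delta - 16)\Phi \le -|g|$ automatically satisfies $(\Delta - k^2)\Phi = (\Delta - 16)\Phi - (k^2 - 16)\Phi \le -|g|$. I would then set $\Phi = \Phi_1 + \Phi_2$, using the modified Bessel function $I_0$, which gives the radial solution of $(\Delta - \lambda^2)I_0(\lambda r) = 0$ that is positive and smooth at the origin. For the boundary data take $\Phi_1 = M\, I_0(4r)/I_0(4R)$, so that $(\Delta - 16)\Phi_1 = 0$ and $\Phi_1 = M$ on $\bdry\Omega$. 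For the inhomogeneity, fix an intermediate rate $\gamma < \gamma' < 4$ and take $\Phi_2 = \frac{A(\gamma')}{16 - \gamma'^2}\, I_0(\gamma' r)/I_0(\gamma' R)$; since $(\Delta - 16)I_0(\gamma' r) = (\gamma'^2 - 16)I_0(\gamma' r)$, this yields $(\Delta - 16)\Phi_2 = -A(\gamma')\,I_0(\gamma' r)/I_0(\gamma' R)$, with the crucial favorable sign. Choosing $A'(\gamma) = A(\gamma')$ (for the plain statement, $\gamma' = (\gamma+4)/2$ makes all constants depend on $\gamma$ alone) is exactly what produces the final \emph{moreover} clause.

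The substance of the argument then rests on two elementary monotonicity facts about $I_0$, and I expect these to be the main obstacle, since they are precisely where a naive exponential barrier fails. The computation $(\Delta - 16)(\e^{-\gamma\rho}) = \e^{-\gamma\rho}(\gamma^2 + \gamma/r - 16)$ shows that the curvature term $\gamma/r$ destroys the supersolution property near the center of the disk; replacing $\e^{-\gamma\rho}$ by the exact eigenfunction $I_0(\gamma' r)$ is what removes this difficulty. For the differential inequality I would use that $r \mapsto I_0(\lambda r)\e^{-\lambda r}$ is decreasing (because $I_0' = I_1 < I_0$ for positive argument), which gives the lower bound $I_0(\gamma' r)/I_0(\gamma' R) \ge \e^{-\gamma'\rho}$ and hence $(\Delta - 16)\Phi_2 \le -A(\gamma')\e^{-\gamma'\rho} \le -|g|$.

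For the decay bound I would control $I_0(\lambda r)/I_0(\lambda R)$ from above by $\e^{-\gamma\rho}$ \emph{uniformly in $R$}, which is the most delicate point. Here the asymptotic $I_0(x)\sim \e^x/\sqrt{2\pi x}$ introduces a prefactor of size $\sqrt{R/r}$ that blows up at the center, and it is exactly the strict inequality $\gamma' > \gamma$ that absorbs this polynomial factor: a sign analysis of $n(r) = I_0(\gamma' r)\e^{-\gamma r}$ (decreasing then increasing, with its supremum on $[0,R]$ attained at an endpoint) gives $\sup_{[0,R]} n \le C(\gamma,\gamma')\, n(R)$ with $C$ independent of $R$, and likewise for $\Phi_1$. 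Assembling $\Phi = \Phi_1 + \Phi_2$, noting $\Phi \ge M$ on $\bdry\Omega$, and tracking these constants produces $K(\gamma)$ and completes the comparison.
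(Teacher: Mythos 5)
Your argument is correct and follows essentially the same route as the paper: a maximum-principle comparison against a nonnegative radial barrier built from the modified Bessel function $I_0$, with the gap $\gamma < \gamma' < 4$ used to absorb the $\sqrt{R/|z|}$ prefactor from the asymptotics of $I_0$ uniformly in $R$. The only cosmetic difference is that you split the barrier as a sum $\Phi_1 + \Phi_2$ (boundary data at rate $4$, inhomogeneity at rate $\gamma'$) and compare $I_0$ to exponentials via monotonicity of $I_0(\lambda r)\e^{-\lambda r}$, whereas the paper first converts the bound on $g$ into a Bessel-type bound at rate $\gamma$ and uses a single barrier $B\,I_0(\gamma|z|)$ with $B$ a maximum of two constants.
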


The proof will rely on the following maximum principle.
\begin{lem}
\label{lem:max-principle}
Let $\Omega$ be a bounded region in $\C$, and let $w,v \in C^2(\Omega)
\cap C^0(\bar{\Omega})$ with $v \geq 0$.  Suppose $w$ satisfies
\begin{equation} \label{eq:pde-1-max}
(\Delta - k^2)w = g,
\end{equation}
and that $v$ satisfies
\begin{equation} \label{eq:pde-2-max}
(\Delta - \underline{k}^2)v = -\bar{g},
\end{equation}
where $k,g,\underline{k},\bar{g} \in C^0(\bar{\Omega})$ are functions
such that
\begin{equation}
k \geq \underline{k} > 0 \; \text{ and } \; |g| \leq \bar{g}.
\end{equation}
If $|w| \leq v$ on $\bdry \Omega$, then $|w| \leq v$ on
$\bar{\Omega}$.
\end{lem}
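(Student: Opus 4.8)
The plan is to prove the lemma by a comparison argument: I would show that the two functions $v-w$ and $v+w$ are both nonnegative on $\bar\Omega$, which together give $|w|\le v$. The key observation is that each of these differences satisfies a differential inequality that, combined with the sign hypotheses on $k,\underline k,\bar g$, allows an application of the classical (weak) maximum principle for the operator $\Delta - \underline k^2$.

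First I would compute the action of the operators on the combination $s := v - w$. Applying $\Delta$ to $s$ and using \eqref{eq:pde-1-max} and \eqref{eq:pde-2-max}, I get $\Delta s = \underline k^2 v - \bar g - k^2 w - g$. The goal is to massage this into an inequality of the form $(\Delta - \underline k^2) s \le 0$ wherever $s < 0$, so that $s$ can attain a negative interior minimum only on $\bdry\Omega$. Writing $(\Delta - \underline k^2)s = -\bar g - g - k^2 w + (\underline k^2 - \underline k^2)v$—more carefully, $(\Delta-\underline k^2)s = (\Delta-\underline k^2)v - (\Delta-\underline k^2)w = -\bar g - g - (k^2 - \underline k^2)w$. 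The hypotheses $|g|\le\bar g$ and $k\ge\underline k>0$ are exactly what is needed: $-\bar g - g \le 0$ always, and at a point where $w>0$ the term $-(k^2-\underline k^2)w\le 0$ as well. The analogous computation for $t := v+w$ gives $(\Delta-\underline k^2)t = -\bar g + g - (k^2-\underline k^2)w$, where now $-\bar g + g \le 0$ and the last term is harmless where $w<0$.

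The main step is then to run the maximum principle cleanly despite the sign of the zeroth-order coefficient. For a Schrödinger-type operator $L = \Delta - c$ with $c = \underline k^2 > 0$, the correct statement is that if $Ls \le 0$ on the set $\{s<0\}$ then $s$ cannot have a negative interior minimum; equivalently $\min_{\bar\Omega} s = \min_{\bdry\Omega} s$ when this minimum is negative. I would verify this by examining a point where $s$ attains its minimum: if that minimum is negative and interior, then $\Delta s \ge 0$ there (second-derivative test), while $Ls = \Delta s - \underline k^2 s \le 0$ forces $\Delta s \le \underline k^2 s < 0$, a contradiction. Since $s = v - w \ge 0$ on $\bdry\Omega$ by the hypothesis $|w|\le v$ there (and $v\ge 0$), the boundary minimum is nonnegative, so $s\ge 0$ on all of $\bar\Omega$; the same argument applied to $t$ gives $t\ge 0$. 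Combining, $-v\le w\le v$, i.e. $|w|\le v$ on $\bar\Omega$, as claimed.

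I expect the only delicate point to be the care needed in applying the maximum principle because of the \emph{sign-indefinite} way the zeroth-order terms enter: one must restrict attention to the region where $s$ (resp. $t$) is negative so that the term $-(k^2-\underline k^2)w$ has a favorable sign there. The positivity hypothesis $\underline k>0$ is essential—it is what rules out a negative interior minimum—and the chain $k\ge\underline k>0$ together with $|g|\le\bar g$ is precisely calibrated so that both $s$ and $t$ inherit the needed inequality. No regularity subtleties arise beyond $w,v\in C^2(\Omega)\cap C^0(\bar\Omega)$, which is exactly the classical setting for the weak maximum principle.
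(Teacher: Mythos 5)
Your proof is correct and takes essentially the same approach as the paper: both examine an interior minimum of $v-w$ (and of $v+w$, which the paper reaches by applying the same argument to $-w$), use $v\ge 0$ to guarantee that $w$ has the favorable sign wherever the difference is negative, and then use $\underline{k}>0$ to rule out a negative interior minimum. One small slip to fix: $(\Delta-\underline{k}^2)(v+w) = -\bar{g}+g+(k^2-\underline{k}^2)w$, with a plus sign on the last term, which is exactly what makes it (as you correctly state) harmless where $w<0$.
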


\begin{proof}
First we claim $w \leq v$, or equivalently that $v - w \geq 0$ on
$\Omega$.  By compactness of $\bar \Omega$, the function $v-w$
achieves its minimum at a point $p$, and it suffices to show that the
minimum value is nonnegative.  If $p \in \bdry \Omega$ then this is
true by the hypothesis that $|w| \leq v$ on $\bdry \Omega$.  If
$w(p) \leq 0$ then $(v-w)(p) \geq 0$ because $v$ is everywhere
nonnegative.  Thus the remaining case is that $w(p) > 0$ and $p$ is an
interior local minimum of $v-w$, hence $\Delta(v-w)(p) \geq 0$.  Then
we find
\begin{align*}
  0 \leq \Delta(v-w)(p) &= \underline{k}(p)^2 v(p)  - k(p)^2 w(p) - \bar{g}(p)- g(p) & \text{using \eqref{eq:pde-1-max}, \eqref{eq:pde-2-max}}\\
    &\leq \underline{k}^2(p) (v-w)(p) - \bar{g}(p) - g(p) &
                                                         \text{because }w(p) >
                                            0\text{ and }k \geq
                                            \underline{k}\\
    &\leq \underline{k}^2(p) (v-w)(p) & \text{because }\bar{g} + g \geq
                           0
\end{align*}
Since $\underline{k} > 0$ this shows $(v-w)(p) \geq 0$ as required.

To complete the proof we must also show that $-w \leq v$.  However,
this follows by applying the argument above to the function $w' = -w$,
which satisfies $(\Delta - k^2)w' = g'$, where $g' = -g$.  Since
$|w'| = |w| \leq v$ on $\bdry \Omega$ and
$|g'| = |g| \leq \bar{g}$, the necessary hypotheses still hold in this
case.
\end{proof}

In the proof of \autoref{thm:exp-decay} we will use
\autoref{lem:max-principle} to reduce to the case where $k$ is
constant and where $g$ and $w$ are both radially symmetric
eigenfunctions of the Laplacian.  In preparation for doing this, we
recall the properties of those eigenfunctions and relate them to the
exponential decay behavior under consideration.

The \emph{modified Bessel function of the first kind} $I_0$ is the
unique positive, even, smooth function on $\R$ such that
\begin{equation}
 \Delta I_0(|z|) = I_0(|z|)
\end{equation}
and $I_0(0)=1$.  Thus the function $I_0(c |z|) / I_0(c R)$ is the
solution to the Dirichlet problem for $(\Delta - c^2)$ on the disk
$|z| < R$ with unit boundary values.

The function $I_0(x)$ satisfies (see e.g.~\cite[Section~9.7.1]{AS64})
\begin{equation}
I_0(x) \sim (2 \pi x)^{-\half} \e^x,
\end{equation}
where $f \sim g$ means that $f(x)/g(x) \to 1$ as $x \to \infty$. It
follows that, if a function $f$ satisfies an exponential decay condition
\begin{equation}
 	\label{eq:decay-cond-1}
 	f < A \e^{- \gamma \rho}
 \end{equation} 
for some $\gamma > 0$,
then for any $\tilde\gamma < \gamma$ we have
\begin{equation}
 	\label{eq:decay-cond-2}
 f < \tilde{A} \, \frac{I_0(\tilde{\gamma} |z|)}{I_0(\tilde{\gamma} R)}
\end{equation}
for some $\tilde{A}(A,\gamma,\tilde\gamma)$ linear in $A$.
Conversely, if we have \eqref{eq:decay-cond-2} and $\gamma \le \tilde\gamma$
then we get \eqref{eq:decay-cond-1} for some $A(\tilde A,\gamma,\tilde\gamma)$
linear in $\tilde A$.\footnote{Adjusting the
constants when converting between exponential and Bessel bounds is
necessary due to the $x^{-\half}$ factor in
the expansion of $I_0$, with the relevant observation being that
$x^{\half} \e^{-\gamma x} = O(\e^{-\tilde{\gamma} x})$ for all
$\tilde{\gamma} < \gamma$ whereas of course
$x^{\half} \e^{-\gamma x} \neq O(\e^{-\gamma x})$.}

\begin{proof}[Proof of \autoref{thm:exp-decay}]
Suppose we are given constants $\gamma < \gamma' < 4$.
The function $g$ obeys
\begin{equation}
	\abs{g} < A(\gamma') \e^{-\gamma' \rho}
\end{equation}
and thus
\begin{equation}
\label{eqn:g-assumption}
\abs{g} < \tilde A \frac{I_0( \gamma |z|)}{I_0(\gamma R)}
\end{equation}
for some $\tilde A(A, \gamma, \gamma')$ linear in $A = A(\gamma')$.
For $w$ satisfying \eqref{eq:w-eq} and $g$ satisfying \eqref{eqn:g-assumption}
we will show that
\begin{equation}
\label{eqn:w-goal}
|w| < \tilde{K}(M+\tilde A) \frac{I_0( \gamma |z|)}{I_0(\gamma R)}
\end{equation}
for some $\tilde{K}(\gamma)$.
Once this is achieved we can pass back from \eqref{eqn:w-goal} to 
the desired \eqref{eq:exp-decay-lemma-target} using the exponential
bound on $I_0$ discussed above.
Moreover, since $\tilde A$ depends linearly on $A(\gamma')$, we can choose
$A' = A$ in \eqref{eq:exp-decay-lemma-target}, at the cost of possibly
rescaling $\tilde K$.

Define 
\begin{equation}
v = B \, I_0(\gamma |z|)
\end{equation}
for a constant $B > 0$.
Note that $(\Delta - \gamma^2)v = 0$.  We will determine a value of
$B$ so that \autoref{lem:max-principle} can be applied to $v$ and
$w$ on $\Omega$.  Specifically, we must ensure that:
\begin{rmenumerate}
\item \label{item:bdry} $|w| \leq v$ on $\bdry \Omega$ and
\item \label{item:inhomog} $(\Delta - 16)v \leq -|g|$ on $\Omega$,
\end{rmenumerate}
so that, in the notation of \autoref{lem:max-principle}, we can take
$\bar{g} = -(\Delta - 16)v$ and $\underline{k} = 4$. 

First we consider \ref{item:bdry}.  The function $v$
is constant on $\bdry
\Omega$ and equal to $B \,I_0(\gamma R)$.  Since $M = \sup_{\bdry \Omega}
|w|$, it suffices to choose
\begin{equation}
\label{eqn:sufficient-for-bdry}
 B \geq \frac{M}{I_0(\gamma R)}.
\end{equation}

Now we turn to \ref{item:inhomog}.  We have 
\begin{equation}
(\Delta - 16)v = - (16 - \gamma^2)v
\end{equation}
which we have written in this way to emphasize that
$(16 - \gamma^2) > 0$.  With the given bound \eqref{eqn:g-assumption} on
$|g|$, the desired inequality \ref{item:inhomog} follows if
\begin{equation}
B \, I_0(\gamma |z|) (16 - \gamma^2) \geq \tilde A \frac{I_0(\gamma |z|)}{ I_0(\gamma R)},
\end{equation}
or equivalently
\begin{equation} \label{eqn:sufficient-for-inhomog}
 B \geq  \frac{\tilde A}{16 - \gamma^2} \frac{1}{I_0(\gamma R)}.
\end{equation}

Using \eqref{eqn:sufficient-for-bdry} and
\eqref{eqn:sufficient-for-inhomog} it is easy to verify that
\begin{equation}
B = \max \left ( 1, \frac{1}{16-\gamma^2} \right ) \cdot (M+\tilde A) \cdot
\frac{1}{I_0(\gamma R)}
\end{equation}
satisfies both conditions \ref{item:bdry}-\ref{item:inhomog},
and then by \autoref{lem:max-principle} we find $|w| \leq v$ on $D$, which is
the desired bound \eqref{eqn:w-goal} with
\begin{equation}
\tilde K = \max \left ( 1, \frac{1}{16 - \gamma^2} \right ).
\end{equation}
\end{proof}

\section{Estimates for the density \texorpdfstring{$u$}{u}}
\label{sec:u-estimates}

As in Section \ref{sec:coordinate-computations} above, let
$u = u(\phi)$ be the solution of the self-duality equation
\eqref{eq:diff-u} on the compact Riemann surface $C$ for a given
holomorphic quadratic differential $\phi \in \cB$ expressed locally
as $\phi = P(z) \, \de z^2$.  It was shown by Minsky in \cite{Minsky92}
(see also \cite[Lemma~2.2]{Wolf91}) that $u$ is approximated by
$\half \log |P|$ up to an error that decays exponentially in the
distance from the zeros of $\phi$.  Building on Minsky's results (and
following a similar outline to \cite[Section~5.4]{DW15}), we establish
the following estimate which gives a slightly faster exponential decay
rate:

\begin{thm}
\label{thm:u-exp}
Fix $\phi \in \cB$ and assume $M(\phi) > 1$.
For any $\gamma < 4$, there exist constants $A(\gamma)$ and $b(\gamma)$ 
such that the density $u = u(\phi)$ satisfies
\begin{equation}
\label{eq:u-exp-c0}
 \left | u(z) - \half \log |P(z)| \right | < A(\gamma) \e^{-\gamma r(z) }  
\end{equation}
for all $z \in C$ with $r(z) > b(\gamma)$.  The constants $A(\gamma)$
and $b(\gamma)$ can be taken to depend only on $\gamma$ and the
topological type of $C$.

Furthermore, under the same hypotheses we have the $C^1$ estimate
\begin{equation}
\label{eq:u-exp-c1}
\left | \nabla_\phi (u - \half \log |P| )(z) \right |_\phi < A(\gamma) \e^{-\gamma r(z) }
\end{equation}
where $\nabla_\phi$ and $|v|_\phi$ denote, respectively, the gradient
and the norm of a tangent vector with respect to the metric $|\phi|$.
\end{thm}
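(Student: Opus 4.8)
The plan is to reduce the self-duality equation \eqref{eq:diff-u} to a clean semilinear equation for the error $w := u - \half \log \abs{P}$ and then feed it into the exponential decay principle \autoref{thm:exp-decay}. Since $P$ is holomorphic, $\half \log \abs{P} = \half \re \log P$ is harmonic on $C \setminus \phi^{-1}(0)$, so there $\Delta w = \Delta u$. Substituting $\e^{2u} = \e^{2w}\abs{P}$ and $\e^{-2u}\abs{P}^2 = \e^{-2w}\abs{P}$ into \eqref{eq:diff-u} turns it into
\begin{equation}
\Delta w = 8 \abs{P}\sinh(2w).
\end{equation}
The crucial structural feature is that $\sinh(2w)/(2w) \ge 1$ for all real $w$ (extended by $1$ at $w=0$), so writing $8\abs{P}\sinh(2w) = k^2 w$ with $k^2 = 16 \abs{P}\,\frac{\sinh(2w)}{2w}$ gives a linear-looking equation $(\Delta - k^2) w = 0$ in which $k^2 \ge 16\abs{P}$ holds with no hypothesis on the sign or size of $w$. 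In the natural coordinate $\zeta = \int \sqrt{P}\,\de z$ for $\phi$, where $\abs{P} \equiv 1$ and the flat Laplacian is Euclidean, this reads $(\Delta_\zeta - k^2)w = 0$ with a continuous coefficient $k \ge 4$ and vanishing inhomogeneous term $g \equiv 0$ — precisely the hypotheses of \autoref{thm:exp-decay}. This is the origin of the decay rate $4$.

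For the a priori input I would invoke Minsky's theorem \cite{Minsky92}, which already provides a bound $\abs{w} \le C$ on the region $\{r > b_0\}$ with $C$ and $b_0$ depending only on the topological type of $C$ (the hypothesis $M(\phi) > 1$, harmless for the asymptotic application since $M(t\phi) = t^{\half} M(\phi)$, keeps these uniform). I would then run the following comparison at each $z_0$ with $d := r(z_0)$ large. The open metric ball $B(z_0, d)$ contains no zeros, so the flat metric $\abs{\phi}$ is genuinely flat there and, there being no conjugate points in zero curvature, the exponential map $\exp_{z_0}$ develops the Euclidean disk $\Omega = \{ \abs{\zeta} < R\}$, with $R := d - b(\gamma)$, as a local isometry into $C$. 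Pulling $w$ and $P$ back along this developing map gives a function $\tilde w$ on the honest planar disk $\Omega$ solving the same equation $(\Delta_\zeta - k^2)\tilde w = 0$, so \autoref{thm:exp-decay} applies. The boundary $\bdry \Omega$ maps to points at distance $R$ from $z_0$, hence at distance $\ge d - R = b(\gamma)$ from the zeros, so (taking $b(\gamma) \ge b_0$) its values are controlled by Minsky's bound, $M := \sup_{\bdry \Omega} \abs{\tilde w} \le C$; meanwhile the center $\zeta = 0$ maps to $z_0$ and lies at distance $\rho = R = d - b(\gamma)$ from $\bdry \Omega$. With $g \equiv 0$, \autoref{thm:exp-decay} yields $\abs{w(z_0)} \le K(\gamma)\, C\, \e^{-\gamma(d - b(\gamma))}$, which is \eqref{eq:u-exp-c0} after absorbing $\e^{\gamma b(\gamma)}$ into $A(\gamma)$.

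The $C^1$ bound \eqref{eq:u-exp-c1} then follows from interior elliptic estimates. Working again in the natural coordinate on a disk of fixed radius (say $1$) about $z_0$, the equation $\Delta_\zeta w = 8 \sinh(2w)$ has smooth right-hand side with $\sup \abs{8\sinh(2w)} \lesssim \sup_{B_1(z_0)} \abs{w} \lesssim \e^{-\gamma(d-1)}$ by the already-established \eqref{eq:u-exp-c0}; the standard interior gradient estimate for the Laplacian gives $\abs{\nabla_\zeta w(z_0)} \lesssim \e^{-\gamma d}$, and since $\zeta$ is a $\abs{\phi}$-isometry this is exactly the desired bound on $\abs{\nabla_\phi(u - \half \log\abs{P})}_\phi$.

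I expect the main obstacle to be the geometric bookkeeping around the developing map: one must confirm that $\exp_{z_0}$ is defined and is a local isometry on the full disk of radius $R$ (which uses flatness and the absence of cone points inside $B(z_0,d)$), and that pulling the PDE back to the genuine planar disk $\Omega$ legitimately converts the surface problem into the planar setting required by \autoref{lem:max-principle} and \autoref{thm:exp-decay}, \emph{even when the development is not injective}. The choice of the offset $b(\gamma)$ — large enough that Minsky's uniform boundary bound is available, yet fixed so that $R = d - b(\gamma)$ still captures essentially all the decay $\e^{-\gamma d}$ — is the point where the hypotheses $M(\phi) > 1$ and $r(z) > b(\gamma)$ enter, and arranging the constants to depend only on $\gamma$ and the topology of $C$ requires care.
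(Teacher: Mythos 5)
Your proposal follows essentially the same route as the paper's proof: a two-sided rough a priori bound on $w = u - \half \log |P|$ at unit distance from the zeros, rewriting the equation in the natural coordinate (allowed to be an immersed chart on the full disk of radius $r(z)$) as $(\Delta - k^2)w = 0$ with $k \ge 4$, applying \autoref{thm:exp-decay} with $g \equiv 0$ on the slightly shrunken disk, and finishing the $C^1$ estimate by interior elliptic theory. The only detail to tidy is that Minsky's lemma as quoted supplies only the \emph{upper} bound $u - \half\log|P| \le M$; the matching lower bound $u - \half\log|P| \ge 0$ needed for $\sup_{\bdry\Omega}|w| \le C$ comes from a separate maximum-principle comparison (\autoref{lem:u-rough-lower}), although your observation that $\sinh(2w)/(2w) \ge 1$ for all real $w$ correctly shows the sign of $w$ is not needed for the coefficient bound $k \ge 4$ itself.
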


To prove this, we will first establish some rough bounds on $u$.
These will allow us to apply \autoref{thm:exp-decay} to the
equation satisfied by $u - \half \log |P|$.

\subsection{Rough bounds}

Let $\e^{2 \sigma} \abs{\de z}^2$ be the Poincar\'e metric on $C$ of
constant (Gaussian) curvature $-4$.  In general, the Gaussian
curvature of a metric $\e^{2 u} \abs{\de z}^2$ is given by
$K=-\e^{-2u} \Delta u$ (see e.g.~\cite[Section~1.5]{ahlfors73});
therefore, the equation $K=-4$ satisfied by the Poincar\'e metric becomes
\begin{equation} \label{eq:diff-sigma}
	\Delta \sigma = 4 \e^{2 \sigma},
\end{equation}
which is equation \eqref{eq:diff-u} with $\phi=0$.

Now for the solution $u = u(\phi)$ of \eqref{eq:diff-u} associated to
a general quadratic differential $\phi = P \,\de z^2$, we have the
following lower bounds in terms of $\sigma$ and $P$:
\begin{lem}
\label{lem:u-rough-lower}
We have $u - \sigma \ge 0$ everywhere on $C$, and $u - \half \log |P|
\geq 0$ on $C \setminus \phi^{-1}(0)$.
\end{lem}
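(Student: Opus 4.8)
The plan is to prove both inequalities by the maximum principle, comparing $u$ against a subsolution of \eqref{eq:diff-u}. In each case I would assume that the relevant difference attains a strictly negative minimum at an interior point, compute its Laplacian there using the defining PDEs, and derive a contradiction on the sign. Throughout I use that $\Delta = 4\partial\bar\partial$ is the ordinary Laplacian, so that $\Delta w(p) \ge 0$ at any interior local minimum $p$ of a $C^2$ function $w$.

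For the inequality $u - \sigma \ge 0$, set $w = u - \sigma$. Both $u$ and $\sigma$ are smooth on the compact surface $C$ (the harmonic metric is a genuine smooth metric on $\cE$, and the Poincar\'e metric $\e^{2\sigma}\abs{\de z}^2$ is smooth), so $w$ is smooth and attains its minimum at some $p \in C$. Subtracting \eqref{eq:diff-sigma} from \eqref{eq:diff-u} gives
\[ \Delta w = 4\left(\e^{2u} - \e^{2\sigma}\right) - 4\e^{-2u}\abs{P}^2. \]
If $w(p) < 0$ then $u(p) < \sigma(p)$, so the first term is negative and the second is nonpositive, forcing $\Delta w(p) < 0$ and contradicting $\Delta w(p) \ge 0$. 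Hence $w \ge 0$ everywhere.

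For the inequality $u - \half\log\abs{P} \ge 0$ on $C \setminus \phi^{-1}(0)$, set $w = u - \half\log\abs{P}$ there. Since $P$ is holomorphic and nonvanishing on this set, $\log\abs{P} = \re\log P$ is harmonic, so $\Delta w = \Delta u = 4(\e^{2u} - \e^{-2u}\abs{P}^2)$. Because $u$ is finite (indeed smooth) across the zeros of $\phi$ while $\half\log\abs{P} \to -\infty$ there, we have $w \to +\infty$ as $z \to \phi^{-1}(0)$; consequently $w$ attains its infimum over $C \setminus \phi^{-1}(0)$ at an interior point $p$. If $w(p) < 0$ then $\e^{2u(p)} < \abs{P(p)}$, hence $\e^{4u(p)} < \abs{P(p)}^2$ and
\[ \Delta w(p) = 4\e^{-2u(p)}\left(\e^{4u(p)} - \abs{P(p)}^2\right) < 0, \]
again contradicting $\Delta w(p) \ge 0$. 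Therefore $w \ge 0$, i.e.\ $u \ge \half\log\abs{P}$ off the zeros.

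The only point worth stating carefully---and the single potential obstacle---is the claim that the minimum is attained at an interior point. For the first inequality this is immediate from compactness of $C$; for the second it relies on the blow-up $w \to +\infty$ near $\phi^{-1}(0)$, which in turn uses that the harmonic metric, and hence $u$, extends smoothly across the zeros of $\phi$. Once interiority is secured, each contradiction is a one-line sign check, so I expect no genuine difficulty beyond recording these smoothness and boundary-behavior facts.
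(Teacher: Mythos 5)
Your proof is correct and follows essentially the same route as the paper's: both arguments evaluate the Laplacian of the relevant difference at an interior minimum (guaranteed by compactness in the first case and by the blow-up of $-\half\log\abs{P}$ near $\phi^{-1}(0)$ in the second) and read off the sign from the PDEs \eqref{eq:diff-u} and \eqref{eq:diff-sigma}. The only cosmetic difference is that you phrase the sign check as a contradiction, whereas the paper deduces nonnegativity at the minimum directly; the content is identical.
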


\begin{proof}
Using \eqref{eq:diff-u} and \eqref{eq:diff-sigma} we have
\begin{equation} \label{eq:delta-difference}
	\Delta(u - \sigma) = 4(\e^{2u} - \e^{2\sigma} - \e^{-2u}
        \abs{P}^2) \leq 4(\e^{2u} - \e^{2\sigma}).
\end{equation}
At a minimum of $u - \sigma$, we have
$\Delta(u - \sigma) \geq 0$ and so by \eqref{eq:delta-difference} we find
$\e^{2u} - \e^{2\sigma} \geq 0$ there.  Thus $u - \sigma \ge 0$ at the
minimum, hence everywhere on $C$.

The lower bound on $u - \half \log |P|$ is similar. Using that $\half
\log|P|$ is harmonic on $C \setminus \phi^{-1}(0)$, we find that
\begin{equation}
\label{eq:u-difference-eqn}
\Delta (u - \half \log|P|) = 4(\e^{2u} - \e^{-2u}
        \abs{P}^2).
\end{equation}
Since $\half \log|P(z)| \to -\infty$ as $z$ approaches a zero of
$\phi$, while $u$ is smooth on the entire surface $C$, the difference
$u - \half \log|P|$ has a minimum on $C \setminus \phi^{-1}(0)$.  At
such a minimum we have $\Delta (u - \half \log|P|) \geq 0$, which
gives $\e^{2u} - \e^{-2u} \abs{P}^2 \geq 0$.  This implies
$u - \half \log|P| \geq 0$ at a minimum, and thus everywhere.
\end{proof}

Complementing these lower bounds on $u$, we have the following rough
comparison to the singular flat metric $|\phi|$.  Recall that the
radius $r(z)$ is the distance from $z$ to $\phi^{-1}(0)$ with respect
to the metric $|\phi|$.

\begin{lem}[Minsky {\cite[Lemma 3.2]{Minsky92}}]
\label{lem:u-rough-upper}
Fix $\phi \in \cB$ and assume $M(\phi) > 1$.  Let $z \in C$ be a point
with $r(z) \geq 1$.  Then $u(z) - \half \log |P(z)| \leq M$ where $M$
is a constant depending only on the topological type of $C$. \noproof
\end{lem}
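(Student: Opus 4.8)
The plan is to pass to the natural coordinate of $\phi$, where the statement reduces to a comparison argument for a single autonomous semilinear equation on a flat disk. Set $w = u - \half\log\abs{P}$; this is a genuine function on $C\setminus\phi^{-1}(0)$ (the difference of two log densities), and by \autoref{lem:u-rough-lower} it satisfies $w\ge 0$. On any simply connected zero-free region I would introduce the natural coordinate $\zeta = \int\sqrt{P}\,\de z$, so that $\phi=\de\zeta^2$ and the singular flat metric $\abs{\phi}$ becomes the Euclidean metric $\abs{\de\zeta}^2$. Since $\half\log\abs{P}$ is harmonic away from the zeros and the flat Laplacian transforms by the conformal factor $\abs{P}$, the substitution $u = w + \half\log\abs{P}$ turns \eqref{eq:u-difference-eqn} into the autonomous equation
\begin{equation}
\Delta_\zeta w = 4(\e^{2w} - \e^{-2w}) = 8\sinh(2w),
\end{equation}
in which all dependence on $P$ has disappeared. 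This is the key simplification: the desired upper bound should be a universal feature of this one equation together with $w\ge 0$.

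Next I would exhibit a flat disk on which to run a barrier argument. Fix $z_0$ with $r(z_0)\ge 1$. Since the nearest zero of $\phi$ lies at $\abs{\phi}$-distance at least $1$, every unit-speed geodesic issuing from $z_0$ avoids the zeros for time less than $1$, so the flat exponential map $\iota\colon B(0,1)\to C\setminus\phi^{-1}(0)$ is defined and is a local isometry onto its (possibly self-overlapping) image, with $\iota(0)=z_0$. Pulling back, the function $\tilde w = w\circ\iota$ lives on the honest Euclidean disk $B(0,1)$, is nonnegative, and still solves $\Delta\tilde w = 8\sinh(2\tilde w)$ there.

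I would then compare $\tilde w$ with an explicit radial supersolution that blows up at the boundary. A direct computation shows that
\begin{equation}
\bar w(s) = -\log(1-s^2) + \half\log 2, \qquad s = \abs{\zeta},
\end{equation}
satisfies $\Delta\bar w = 4(1-s^2)^{-2} \le 8\sinh(2\bar w)$ on $B(0,1)$ and tends to $+\infty$ as $s\to 1$. (More structurally, the superlinear growth of $\sinh$ guarantees a boundary–blow-up ``large solution'' by the Keller--Osserman criterion; the displayed $\bar w$ is just a convenient explicit choice.) Because $\tilde w$ is bounded on each $\overline{B(0,\rho)}$ with $\rho<1$ while $\bar w\to+\infty$ at the boundary, the difference $\tilde w-\bar w$ attains its maximum at an interior point $p$. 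If that maximum were positive then $\tilde w(p)>\bar w(p)$, and monotonicity of $\sinh$ would give $\Delta(\tilde w-\bar w)(p) \ge 8\sinh(2\tilde w(p)) - 8\sinh(2\bar w(p)) > 0$, contradicting that $\Delta$ of a function is $\le 0$ at an interior maximum. Hence $\tilde w\le\bar w$ on $B(0,1)$, and evaluating at the center yields
\begin{equation}
w(z_0) = \tilde w(0) \le \bar w(0) = \half\log 2.
\end{equation}
This is a universal constant, which a fortiori depends only on the topological type of $C$; note that only $r(z_0)\ge 1$ was used, not the full hypothesis $M(\phi)>1$.

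The conformal change of variables and the verification that $\bar w$ is a supersolution are routine. The step I expect to be the main obstacle, and would treat most carefully, is the geometric input in the second paragraph: that the flat exponential map genuinely furnishes an immersed Euclidean unit disk on which the equation pulls back unchanged, so that the maximum principle may be applied on the abstract disk $B(0,1)$ \emph{even when the image in $C$ is not embedded}. One must check that $\iota$ is defined on the entire open ball (no geodesic reaches a cone singularity before time $1$, which is precisely $r(z_0)\ge 1$) and that the pullback preserves both the flat metric and the PDE (true because $\iota$ is a local isometry). I would also confirm the boundary behavior needed to localize the maximum of $\tilde w-\bar w$ to the interior.
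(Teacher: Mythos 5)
The paper does not actually prove this lemma---it is quoted from Minsky \cite[Lemma~3.2]{Minsky92} with a remark explaining why the hypotheses here furnish the embedded zero-free disk his statement requires. So your proposal should be judged as a self-contained replacement proof, and as such it is essentially correct and very much in the spirit of both Minsky's argument and the paper's own later use of immersed natural-coordinate charts (compare the proof of \autoref{thm:u-exp}, where the chart $\abs{\zeta}<r(z)$ is likewise allowed to be immersed). The reduction to the autonomous equation $\Delta_\zeta w=8\sinh(2w)$ is exactly the paper's \eqref{eq:u-difference-eqn-P1}; the supersolution checks out, since $\Delta\bigl(-\log(1-s^2)\bigr)=4(1-s^2)^{-2}$ and $8\sinh(2\bar w)=8(1-s^2)^{-2}-2(1-s^2)^2$, so the required inequality amounts to $(1-s^2)^4\leq 2$; and your constant $\half\log 2$ is universal, hence a fortiori of the advertised form. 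A pleasant byproduct of working upstairs on the abstract disk is that you only need the disk to be immersed, not embedded, so the hypothesis $M(\phi)>1$ is indeed never used.

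The one step you flag but do not close is the only real gap: the assertion that $\tilde w-\bar w$ attains its maximum at an interior point of $B(0,1)$. Local boundedness of $\tilde w$ on each $\overline{B(0,\rho)}$ together with $\bar w\to+\infty$ does not by itself force the supremum to be attained, because $\tilde w$ itself blows up as $s\to 1$ along any ray whose geodesic reaches a zero at time exactly $1$ (there $u$ stays bounded while $-\half\log\abs{P}\to+\infty$). One can check that at a zero of order $k$ the growth is $\sim-\tfrac{k}{k+2}\log(1-s)$, which is strictly slower than $\bar w\sim-\log(1-s)$, so the difference does tend to $-\infty$; but it is cleaner to avoid this entirely by running the comparison on $B(0,\rho)$ for $\rho<1$ against the rescaled barrier $\bar w_\rho(s)=-\log(\rho^2-s^2)+\half\log(2\rho^2)$, where $\tilde w$ is genuinely bounded (its image lies in $\{r\geq 1-\rho\}$), and then letting $\rho\to 1$ to recover $w(z_0)\leq\half\log 2$. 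With that standard exhaustion inserted, the proof is complete. Note also that \autoref{lem:u-rough-lower} ($w\geq 0$) is not actually needed for the comparison step, since $\sinh$ is monotone on all of $\R$.
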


Note that Minsky's bound is more general, giving an upper bound at any
point $z$ depending only on the topological type of $C$ and on the
$|\phi|$-radius $R$ of an embedded disk centered at $z$ that contains
no zeros of $\phi$.  The hypotheses of the lemma above give such a
disk of definite radius (in fact, one can take $R = \half$), resulting
in the bound stated above that only depends on the topological type.

\subsection{Exponential bounds}
\begin{proof}[Proof of \autoref{thm:u-exp}]
We start with the $C^0$ bound \eqref{eq:u-exp-c0}.  Consider a local
coordinate $\zeta$ about $z$ in which $\phi = \de \zeta^2$.  Allowing
this coordinate chart to be immersed, rather than embedded, we can
take it to be defined on $|\zeta| < r(z)$ with $z$ corresponding to
$\zeta=0$.  While the boundary of this disk touches the zero set of
$\phi$ (by definition of $r$), if we consider
$D = \{ |\zeta| < r(z) -1 \}$ then the image of this disk in $C$
consists of points satisfying the hypotheses of
\autoref{lem:u-rough-upper}.  Therefore, by \autoref{lem:u-rough-lower}
and \autoref{lem:u-rough-upper} we have $0 \leq u(\zeta) \leq M$ for all
$\zeta \in D$.

In this coordinate system we have $P(\zeta) \ident 1$ and thus
\eqref{eq:u-difference-eqn} becomes
\begin{equation}
\label{eq:u-difference-eqn-P1}
\Delta u = 4 \e^{2u} - 4 \e^{-2u} = 8
\sinh(2u).
\end{equation}
Similarly, in this coordinate the difference $|u - \half
\log |P||$ reduces to $|u|$.

The function $8\sinh(2x)/x$ has a removable singularity at $x=0$; let
$f$ denote its extension to a smooth function on $\R$, which satisfies
$f(x) \geq 16$ for $x \geq 0$.  Since $u \geq 0$ we can rewrite the
equation above as
\begin{equation} (\Delta - k^2)u = 0\end{equation}
where $k = \sqrt{f(u)}$, and thus $k \geq 4$.  Now
\autoref{thm:exp-decay} applies to $u$ on $D$ with $g \ident 0$
and $\rho = r(z)-1$, giving
\begin{equation} |u| \leq A(\gamma) \e^{-\gamma (r(z)-1)
} \end{equation} for all $\gamma < 4$.  Absorbing the $\e^{\gamma}$
factor into the multiplicative constant we obtain the desired bound
\eqref{eq:u-exp-c0} in terms of $\e^{-\gamma r(z)}$.

Given this $C^0$ bound, the corresponding $C^1$ bound
\eqref{eq:u-exp-c1} follows by standard elliptic theory applied to
\eqref{eq:u-difference-eqn-P1}, as shown in e.g.~\cite[Corollary~5.10]{DW15}.
\end{proof}

\section{Estimates for the complex variation \texorpdfstring{$F$}{F}}
\label{sec:F-estimates}

Next we turn to the complex variation $F = F(\phi,\dot\phi)$ associated
to $\phi \in \cB'$ and $\dot{\phi} \in T_\phi \cB$.  We will see that
this function is exponentially close to
$\half \frac{\dot{\phi}}{\phi} = \half \frac{\dot{P}}{P}$.
Specifically, we have:

\begin{thm}
\label{thm:f-exp}
Fix $\phi \in \cB$ and assume $M(\phi) > 1$. Also fix $\dot\phi$.
For any $\gamma < 4$, there exist constants $A(\gamma)$ and $b(\gamma)$
such that the function $F(\phi, \dot\phi)$ satisfies
\begin{equation}
 \left | F - \frac{1}{2} \frac{\dot{\phi}}{\phi} \right | < A(\gamma)
\|\dot{\phi}\| \e^{-\gamma r(z)}
\end{equation}
for all $z \in C$ with $r(z) > b(\gamma)$.
\end{thm}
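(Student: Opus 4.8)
The plan is to follow the proof of \autoref{thm:u-exp} almost verbatim, after first identifying the equation that governs the difference $G := F - \half \frac{\dot P}{P}$ (globally, $G = F - \half\frac{\dot\phi}{\phi}$, a function on $C\setminus\phi^{-1}(0)$). Since $\frac{\dot\phi}{\phi} = \frac{\dot P}{P}$ is holomorphic away from the zeros of $\phi$, it is annihilated by $\Delta$, so applying $\Delta - 8(\e^{2u}+\e^{-2u}\abs{P}^2)$ to $\half\frac{\dot P}{P}$ produces only the zeroth-order term. Subtracting this from \eqref{eq:diff-F} and using the identity $\abs{P}^2\frac{\dot P}{P} = \overline P\dot P$, a short computation collapses the source terms to
\[
\left(\Delta - 8(\e^{2u}+\e^{-2u}\abs{P}^2)\right)G = 4\frac{\dot P}{P}\left(\e^{2u}-\e^{-2u}\abs{P}^2\right).
\]
The point of writing the right-hand side this way is the factorization $\e^{2u}-\e^{-2u}\abs{P}^2 = 2\abs{P}\sinh\!\big(2(u-\half\log\abs{P})\big)$: by \autoref{thm:u-exp} the argument of $\sinh$ is exponentially small in $r$, so the $O(1)$ source of \eqref{eq:diff-F} has turned into an exponentially small source for $G$. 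This cancellation is the crux of the argument.

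With this in hand I would reprise the local argument of \autoref{thm:u-exp}. In the natural coordinate $\zeta$ with $\phi = \de\zeta^2$ on the disk $\{\abs{\zeta} < r(z)\}$ centered at $z$ we have $P\equiv 1$, and the equation for $G$ takes the form $(\Delta - k^2)G = g$ required by \autoref{thm:exp-decay}, with $k^2 = 8(\e^{2u}+\e^{-2u}) \geq 16$ (so $k\geq 4$) and $g = 8\frac{\dot P}{P}\sinh(2u)$. On $D = \{\abs{\zeta} < r(z)-1\}$ the rough bounds \autoref{lem:u-rough-lower}--\autoref{lem:u-rough-upper} give $0\leq u\leq M$, whence $\abs{\sinh 2u}\leq \Lambda\abs{u}$; combining the decay $\abs{u(\zeta)}\leq A(\gamma')\e^{-\gamma' r(\zeta)}$ of \autoref{thm:u-exp} with the elementary inequality $r(\zeta) \geq \rho(\zeta)+1$ (where $\rho(\zeta) = (r(z)-1)-\abs{\zeta}$ is the distance to $\partial D$) shows that $g$ obeys the hypothesis $\abs{g} < A'(\gamma)\e^{-\gamma\rho}$ for every $\gamma<4$; in the collar near $\partial D$, where \autoref{thm:u-exp} might not yet apply, the constant bound on $u$ suffices after enlarging $A'$. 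Applying \autoref{thm:exp-decay} to the real and imaginary parts of $G$ and evaluating at $\zeta=0$ then gives $\abs{G(z)} < A(\gamma)\|\dot\phi\|\,\e^{-\gamma r(z)}$ for $r(z)>b(\gamma)$, provided the boundary quantity $M = \sup_{\partial D}\abs{G}$ is controlled.

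The one genuinely new ingredient---and the step I expect to be the main obstacle---is this uniform control of the boundary values, since, unlike $u$, the function $F$ admits no obvious barrier. I would obtain a \emph{global} bound $\abs{F}\leq \Lambda\|\dot\phi\|$ on all of $C$ by recasting \eqref{eq:diff-F} in the harmonic metric $\e^{2u}\abs{\de z}^2$: multiplying by $\e^{-2u}$ turns it into $(\Delta_{g} - \tilde c)F = -8\e^{-4u}\overline P\dot P$, where $\Delta_g$ is the Laplace--Beltrami operator of $\e^{2u}\abs{\de z}^2$, the coefficient $\tilde c = 8(1+\e^{-4u}\abs{P}^2)\geq 8$, and the new source $8\e^{-4u}\overline P\dot P$ is a globally defined continuous function that vanishes at the zeros of $\phi$ (there $\e^{-4u}\abs{P}\abs{\dot P}\to 0$). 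Being continuous on the compact surface and linear in $\dot\phi$, this source is bounded by $\Lambda\|\dot\phi\|$, and the maximum principle on the closed surface $C$---applied to $\re F$ and $\im F$---then yields $\abs{F}\leq \Lambda\|\dot\phi\|$ everywhere. Finally, on $\partial D$, which lies at $\abs{\phi}$-distance at least $1$ from $\phi^{-1}(0)$, the meromorphic function $\frac{\dot\phi}{\phi}$ is bounded by $\Lambda\|\dot\phi\|$: its only poles are at the zeros of $\phi$, so it is bounded on the fixed compact set $\{r\geq 1\}$, with a bound linear in $\dot\phi$. Combining these two facts bounds $M = \sup_{\partial D}\abs{G}$ by $\Lambda\|\dot\phi\|$, which closes the argument.
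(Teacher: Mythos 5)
Your proposal is correct and takes essentially the same route as the paper: the same difference function $f = F - \tfrac{1}{2}\dot\phi/\phi$, the same key observation that the source term of its equation is $\tfrac{\dot\phi}{\phi}$ times a $\sinh$ of the exponentially small quantity $u - \tfrac{1}{2}\log|P|$ controlled by \autoref{thm:u-exp}, the same global maximum-principle bound $\sup_C |F| = O(\|\dot\phi\|)$ (this is exactly the paper's \autoref{lem:F-rough}), and the same application of \autoref{thm:exp-decay}. The only deviations are cosmetic---the paper works on the disk $|\zeta| \le (1-\eps)r(z)$ and bounds $\dot\phi/\phi$ via \autoref{lem:phi-pointwise} (uniformly along the ray $\R_+\phi$), whereas you use $|\zeta| < r(z)-1$ and a compactness argument for the fixed $\phi$, which suffices for the statement as given.
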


To prove this, we proceed as in \S \ref{sec:u-estimates}, first deriving
some rough bounds, and then improving them to exponential bounds using
\autoref{thm:exp-decay}.

\subsection{Rough bounds}
We begin with some notation related to metrics on $C$.  If $\eta$ is a
log density on $C$, with associated K\"ahler metric
$\e^{2 \eta} |\de z^2|$, and if $\phi \in \cB$ has local expression
$\phi = P \,\de z^2$, we denote by
\begin{equation} |\phi|_\eta = \e^{-2 \eta} |P| : C \to
\R \end{equation} the pointwise norm function of $\phi$ with respect
to this metric, and by
\begin{equation}\|\phi\|_\eta = \sup_C |\phi|_\eta\end{equation} the
associated sup-norm.  Finally, we let
\begin{equation} \Delta_\eta = \e^{-2 \eta} \Delta \end{equation}
denote the Laplace-Beltrami operator of the metric $\e^{2 \eta} |\de z^2|$.

Recall from \S \ref{sec:u-estimates} that
$\sigma$ denotes the density of the Poincar\'e metric on $C$ of curvature $-4$.

\begin{lem}
\label{lem:F-rough}
The complex variation $F$ satisfies $\sup |F| \leq
\|\dot{\phi}\|_\sigma$.
\end{lem}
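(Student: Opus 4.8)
The plan is to prove this by a global maximum principle argument applied to $\abs{F}^2$ on the compact surface $C$. Since $F = F(\phi,\dot\phi)$ is the unique smooth solution of the elliptic equation \eqref{eq:diff-F}, the real function $\abs{F}^2$ is smooth and attains its maximum at some point $p \in C$. Working in a local conformal coordinate near $p$, I would compute the flat Laplacian of $\abs{F}^2$ via the product rule for a complex function, namely
\begin{equation}
\Delta \abs{F}^2 = 2 \re(\overline{F}\, \Delta F) + 2 \abs{\nabla F}^2,
\end{equation}
and then substitute $\Delta F = 8(\e^{2u} + \e^{-2u}\abs{P}^2) F - 8 \e^{-2u}\overline{P}\dot{P}$ from \eqref{eq:diff-F}. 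At the maximum $p$ we have $\Delta \abs{F}^2 \le 0$ (the Hessian of a smooth function is negative semidefinite at an interior maximum, so its trace in any coordinate is nonpositive), while the gradient term only helps since $\abs{\nabla F}^2 \ge 0$.

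Discarding the gradient term and using $\re(\overline{F}\,\overline{P}\dot{P}) \le \abs{F}\,\abs{P}\,\abs{\dot P}$ (Cauchy--Schwarz for the complex inner product) then yields, at $p$,
\begin{equation}
(\e^{2u} + \e^{-2u}\abs{P}^2)\,\abs{F}^2 \le \e^{-2u}\abs{F}\,\abs{P}\,\abs{\dot P}.
\end{equation}
If $\abs{F(p)} = 0$ the claim is immediate; otherwise I divide by $\abs{F(p)}$ and apply the elementary bound $\e^{2u} + \e^{-2u}\abs{P}^2 \ge 2\abs{P}$ (AM--GM) to the denominator, obtaining $\abs{F(p)} \le \half \e^{-2u}\abs{\dot P}$. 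Finally, invoking the lower bound $u \ge \sigma$ from \autoref{lem:u-rough-lower}, so that $\e^{-2u} \le \e^{-2\sigma}$, gives
\begin{equation}
\sup_C \abs{F} = \abs{F(p)} \le \tfrac12 \e^{-2\sigma}\abs{\dot P}(p) \le \tfrac12 \|\dot\phi\|_\sigma \le \|\dot\phi\|_\sigma,
\end{equation}
where in the middle step I recognize $\e^{-2\sigma}\abs{\dot P} = \abs{\dot\phi}_\sigma \le \|\dot\phi\|_\sigma$. This in fact proves the slightly stronger bound with a factor of $\half$, which comfortably implies the stated inequality.

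The argument is essentially routine once the right quantity is chosen, so there is no serious obstacle; the only points requiring care are the justification that the maximum-principle inequality $\Delta\abs{F}^2(p) \le 0$ is legitimate in a local conformal coordinate (valid because $\Delta = \partial_x^2 + \partial_y^2$ is the trace of the coordinate Hessian, which is nonpositive at an interior maximum), and the clean packaging of the cross term so that the positive operator coefficient $8(\e^{2u}+\e^{-2u}\abs{P}^2)$ dominates. The conceptual content lies entirely in pairing the AM--GM estimate on that coefficient with the comparison $u \ge \sigma$ to a fixed reference metric, which is what allows the final bound to be expressed in terms of the Poincar\'e sup-norm $\|\dot\phi\|_\sigma$ rather than a $u$-dependent quantity.
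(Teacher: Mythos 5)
Your argument is correct and follows essentially the same route as the paper's: a maximum-principle estimate on the compact surface combined with the bound $\e^{2u}+\e^{-2u}|P|^2 \ge 2|P|$ and the comparison $u \ge \sigma$ from \autoref{lem:u-rough-lower}. The only difference is that you apply the maximum principle to $|F|^2$ rather than to $\re F$ and $\im F$ separately, which avoids the factor of $2$ the paper incurs at that step and yields the slightly sharper constant $\tfrac12\|\dot\phi\|_\sigma$.
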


\begin{proof}
Rewriting \eqref{eq:diff-F} in terms of the Laplace-Beltrami operator
$\Delta_u$ it becomes:
\begin{equation}
\label{eq:diff-F-named-coefs}
(\Delta_u - K) F = -G,
\end{equation}
where
\begin{equation}
\begin{split}
K &= 8 (1 + |\phi|_u^2), \\
G &= \frac{8\bar{\phi} \dot{\phi}}{\e^{4u} |\de z|^4}.
\end{split}
\end{equation}
Note that $K > 0$ and that $G$ is a well-defined complex scalar
function on $C$ which satisfies
\begin{equation}
 |G| = 8 |\phi|_u |\dot{\phi}|_u.
\end{equation}
Considering a maximum and a minimum of each of the real and imaginary parts of $F$, 
which exist by compactness, we find from \eqref{eq:diff-F-named-coefs} that
\begin{equation}
 \sup |F| \leq \sup 2 K^{-1} |G|,
\end{equation}
and we have
\begin{equation}
 K^{-1}|G| = |\dot{\phi}|_u \left ( \frac{|\phi|_u}{1 + |\phi|_u^2} \right
) \leq \frac{1}{2} |\dot{\phi}|_u.
\end{equation}
Finally, by \autoref{lem:u-rough-lower}, we have
$u \geq \sigma$.  Therefore $|\dot{\phi}|_u \leq |\dot{\phi}|_\sigma$ and
\begin{equation}
\sup |F| \leq \sup 2 K^{-1}|G| \leq \sup |\dot{\phi}|_\sigma =
\|\dot{\phi}\|_\sigma. 
\end{equation}
\end{proof}

We will also need the following lower bound on the pointwise norm
$|\phi|_\sigma$.  Recall $r(z)$ denotes the $|\phi|$-distance from $z$
to $\phi^{-1}(0)$.

\begin{lem}
\label{lem:phi-pointwise}
Let $\phi \in \cB'$ and suppose $\|\phi\|_\sigma \geq 1$.  There
exists a constant $\delta$ depending only on the ray $\R_+ \phi$ with
the following property: If $z \in C$ satisfies $r(z) > 1$,
then $|\phi|_\sigma(z) \geq \delta$.
\end{lem}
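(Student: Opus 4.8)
The plan is to reduce the statement to a single normalized representative of the ray by using the homogeneity of the $|\phi|$-metric, and then to split $C$ into a region far from the zeros, where the bound is immediate from compactness, and a region near the zeros, where it follows from the conical structure of the flat metric $|\phi_0|$.

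First I would fix the representative $\phi_0$ of the ray $\R_+\phi_0$ with $\|\phi_0\|_\sigma = 1$ and write a general element of the ray as $\phi = t\phi_0$; the hypothesis $\|\phi\|_\sigma \geq 1$ becomes simply $t \geq 1$. Under this scaling the two relevant quantities transform homogeneously: $|\phi|_\sigma(z) = t\,|\phi_0|_\sigma(z)$ and, since the metric tensor $|\phi| = t\,|\phi_0|$ scales lengths by $t^{\half}$, the radius satisfies $r(z) = t^{\half} r_0(z)$, where $r_0(z) = d(z,\phi_0^{-1}(0))$ is the $|\phi_0|$-radius. Thus the claim reduces to producing a single $\delta > 0$, depending only on $\phi_0$, such that $t\,|\phi_0|_\sigma(z) \geq \delta$ whenever $t \geq 1$ and $t^{\half} r_0(z) > 1$.

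For the far region, the set $\{r_0 \geq 1\}$ is a compact subset of $C$ containing no zeros of $\phi_0$, and $|\phi_0|_\sigma$ is continuous and strictly positive there, so $\delta_1 := \min_{\{r_0 \geq 1\}} |\phi_0|_\sigma > 0$. For any $z$ with $r_0(z) \geq 1$ and any $t \geq 1$ this already gives $t\,|\phi_0|_\sigma(z) \geq \delta_1$. It remains to treat points with $r_0(z) < 1$, and here I claim the sharper bound $|\phi_0|_\sigma(z) \geq \delta_2\, r_0(z)^2$ on $\{0 < r_0 \leq 1\}$ for some $\delta_2 > 0$. Granting this, if $r_0(z) < 1$ and $t^{\half} r_0(z) > 1$ then $t > r_0(z)^{-2}$, so $t\,|\phi_0|_\sigma(z) \geq t\,\delta_2\, r_0(z)^2 > \delta_2$; taking $\delta = \min(\delta_1,\delta_2)$ finishes the proof.

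The one genuinely analytic point, which I expect to be the crux, is the near-zero bound $|\phi_0|_\sigma \geq \delta_2\, r_0^2$. This follows from the behavior of $|\phi_0|_\sigma$ at a simple zero: passing to the natural coordinate $w$ with $\phi_0 = \de w^2$, a simple zero is a cone point of the flat metric, and a short local computation in a holomorphic coordinate $z$ centered at the zero gives $|P_0| \asymp |z|$ and $r_0 \asymp |z|^{3/2}$, whence $|\phi_0|_\sigma = \e^{-2\sigma}|P_0| \asymp r_0^{2/3}$. Since $\tfrac{2}{3} < 2$, the ratio $|\phi_0|_\sigma / r_0^2 \asymp r_0^{-4/3}$ tends to $+\infty$ at each of the finitely many zeros. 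Combining this blow-up with the continuity and positivity of $|\phi_0|_\sigma / r_0^2$ away from the zeros on the compact set $\{r_0 \leq 1\}$ produces the uniform lower bound $\delta_2 > 0$. Everything apart from this conical asymptotic is compactness together with the homogeneity of the $|\phi|$-metric, and it is precisely the exponent $\tfrac{2}{3}$ at simple zeros that makes the ray-uniform constant $\delta$ survive.
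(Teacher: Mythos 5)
Your proof is correct and takes essentially the same route as the paper's: normalize along the ray, handle the region $\{r_0 \geq 1\}$ by compactness, and near a simple zero use the conical asymptotics $r_0 \asymp |\zeta|^{3/2}$, $|\phi_0|_\sigma \asymp |\zeta|$ in a coordinate with $\phi_0 = \zeta \, \de\zeta^2$, which is exactly the computation the paper performs to get the lower bound $\|\phi\|_\sigma^{2/3}$. Your intermediate pointwise inequality $|\phi_0|_\sigma \geq \delta_2 r_0^2$ is just a clean repackaging of that local computation, with the exponent $2$ chosen precisely so that the hypothesis $t^{1/2} r_0 > 1$ closes the argument.
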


\begin{proof}
Let $Z = \phi^{-1}(0)$.  First suppose that $\|\phi\|_\sigma = 1$.
For any positive radius $r_0$, a uniform lower bound on
$|\phi|_\sigma(z)$ for $z$ with $r(z) \geq r_0$ follows
immediately from compactness of $C$ and of the unit ball in
$\cB$.

Using that $d_{t \phi} = t^{1/2} d_\phi$, the same argument shows that
for $\|\phi\|_\sigma \geq 1$ we also have a uniform lower bound on
$|\phi|_\sigma(z)$ when $r(z)$ is greater than a fixed
positive multiple of $\|\phi\|_\sigma^{1/2}$.

Thus to complete the argument it suffices to consider the case when
$r(z)$ is small compared to $\|\phi\|_\sigma^{1/2}$.  That
is, we consider a point $z$ in a disk of radius
$\epsilon \|\phi\|_\sigma^{1/2}$ about one of the zeros.
Equivalently, if we let $\phi_0 = \|\phi\|_\sigma^{-1} \phi$, then $z$
lies in a disk of $\phi_0$-radius $\epsilon$ about a zero of $\phi_0$.
Assume that $\epsilon$ is small enough (depending on the ray) so
that there is only one zero of $\phi_0$ in this disk, and that the
disk is identified with $|\zeta| < R$ by a coordinate function $\zeta$ such that
$\phi_0 = \zeta \de \zeta^2$. We work in this coordinate system for
the rest of the proof.

Write the Poincar\'e metric of $C$ on this disk as
$\e^{2\sigma} |\de \zeta|^2$.  Then, using compactness of the unit
ball in $\cB$ again, we have $\e^{2\sigma} \leq M$ for a uniform constant
$M$.

The $\phi_0$-distance from $0$ to $\zeta$ is proportional to
$|\zeta|^{3/2}$, and thus the $\phi$-distance is proportional to
$\|\phi\|_\sigma^{1/2}\, |\zeta|^{3/2}$, with universal constants in
both cases.  The hypothesis that $r(z) \geq 1$ therefore becomes
$|\zeta(z)| \geq c \, \|\phi\|_\sigma^{-1/3}$ for a constant $c > 0$.
Using that $\phi = \|\phi\|_\sigma \zeta \de \zeta^2$, at such a point $z$
we have
\begin{equation} |\phi|_\sigma(z) = \e^{-2 \sigma(z)} \|\phi\|_\sigma
\, |\zeta(z)| \geq M^{-1}  \|\phi\|_\sigma  (c
\|\phi\|_\sigma^{-1/3} ) = M^{-1} c \|\phi\|_\sigma^{2/3}
\end{equation}
Since we assumed $\|\phi\|_\sigma \geq 1$, this gives the desired lower
bound with $\delta = M^{-1} c$.
\end{proof}

\subsection{Exponential bounds}

\begin{proof}[Proof of \autoref{thm:f-exp}]
Define
\begin{equation} f = F - \frac{1}{2} \frac{\dot{\phi}}{\phi},\end{equation}
so that our goal is to give an exponentially decaying upper bound on
$|f|$ at a point $z$. As in the proof of \autoref{thm:u-exp} we first choose an immersed
coordinate chart $|\zeta| < r(z)$ where $\phi = \de \zeta^2$ and $z$
corresponds to $\zeta=0$.

Using \eqref{eq:diff-F}, after a bit of algebra we find that in this
coordinate system $f$ satisfies the equation
\begin{equation}
\label{eq:diff-F-in-P-coord}
(\Delta - 16\cosh(2u)) f = -8 \frac{\dot{\phi}}{\phi} \sinh(2u).
\end{equation}

Let $\eps > 0$, and restrict attention to the smaller disk
$\Omega = \{ |\zeta| \leq (1-\eps) r(z) \}$.  Assume that
$r(z) > \eps^{-1}$.  Then all points of $\Omega$ are at distance at
least $1$ from $\phi^{-1}(0)$, and \autoref{lem:phi-pointwise} gives
\begin{equation}
\label{eqn:phidot-over-phi}
\left\lvert \frac{\dot{\phi}}{\phi} \right\rvert = \frac{|\dot{\phi}|_\sigma}{|\phi|_\sigma}
\leq \delta^{-1} \|\dot{\phi}\|_\sigma
\end{equation}
throughout $\Omega$.  Now fix $\gamma < 4$, let $b(\gamma)$ denote
the constant from \autoref{thm:u-exp}, and assume that $r(z) >
\eps^{-1} b(\gamma)$.  Then \autoref{thm:u-exp} applies to $u$ at each point
of $\Omega$, giving
\begin{equation}
 \abs{u(\zeta)} < A(\gamma) \e^{-\gamma r(\zeta) }.
\end{equation}
Combining this with the bound \eqref{eqn:phidot-over-phi} on
$\frac{\dot{\phi}}{\phi}$, we find that the right hand side of
\eqref{eq:diff-F-in-P-coord} is bounded above by
\begin{equation} \label{eq:interm-bound}
 A'(\gamma) \|\dot{\phi}\|_\sigma \e^{-\gamma r(\zeta)}
\end{equation}
for some constant $A'(\gamma)$.

Let $\rho$ denote the function on
$\Omega$ that gives the $\phi$-distance to $\partial \Omega$.
Since $r(z) > \rho(z)$ we can replace \eqref{eq:interm-bound}
by
\begin{equation}
 A'(\gamma) \|\dot{\phi}\|_\sigma \e^{-\gamma \rho}.
\end{equation}
When combined with the fact that $16 \cosh(2u) \geq 16$, this
exponential decay of the inhomogeneous term of
\eqref{eq:diff-F-in-P-coord} implies that the solution $f$ is also
exponentially decaying; specifically, applying
\autoref{thm:exp-decay} we have for any $\gamma < 4$
\begin{equation}
\label{eqn:f-almost-done}
|f| \leq K(\gamma) (M +  A''(\gamma) \|\dot{\phi}\|_\sigma) \e^{- \gamma \rho}
\end{equation}
where $M = \sup_{\bdry \Omega} |f|$. We have
$|f| \leq |F| + \half \left |\frac{\dot{\phi}}{\phi} \right |$,
and therefore \autoref{lem:F-rough} and \eqref{eqn:phidot-over-phi} 
give $M \leq (1 + \delta^{-1}) \|\dot{\phi}\|_\sigma$.
Substituting this value for $M$ into \eqref{eqn:f-almost-done} and
evaluating at $\zeta = 0$ (i.e.~at $z$), where $\rho = (1-\eps) r(z)$, we obtain
\begin{equation} |f(z)| \leq A'''(\gamma) \|\dot{\phi}\|_\sigma \e^{-\gamma(1-\eps) r(z)}\end{equation}
for a constant $A'''(\gamma)$ depending on the ray $\R_+ \phi$.
Since $\eps$ was arbitrary, this gives the desired bound.
\end{proof}

\section{Holomorphic variations}
\label{sec:holomorphic}

In our analysis of $C_\near$ we will exploit the following basic observation:
if $D \subset C$ is a disk containing exactly one zero of $\phi$,
then any holomorphic quadratic differential 
$\dot\phi$ on $D$ can be realized as $\dot\phi = \cL_X \phi$
for some holomorphic vector field $X$ on $D$.
This fact allows us to construct an explicit solution of the complex
variation equation \eqref{eq:diff-F} on $D$, using the following:

\begin{thm}
\label{thm:holo-variation}
Given a quadratic differential $\phi = P(z) \, \de z^2$, solution $u$ of
\eqref{eq:diff-u}, and holomorphic vector field $X = \chi(z)
\frac{\partial \:}{\partial z}$, let
\begin{equation}\label{eq:L-X-phi}
 \psi_X = \mathcal{L}_X \phi =  (\chi P_z + 2 \chi_z P) \,\de z^2\end{equation}
and define a complex scalar function $F_X$ by
\begin{equation}\label{eq:F-X}
F_X \e^{2 u} |\de z|^2 = \mathcal{L}_X (\e^{2u} |\de z|^2)\end{equation}
or equivalently in local coordinates 
\begin{equation} F_X = \chi_z + 2 \chi u_z. \end{equation}
Then $F = F_X$ satisfies the complex variation equation
\eqref{eq:diff-F} with $\dot{\phi} = \psi_X$.
\end{thm}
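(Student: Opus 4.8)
Given $\phi = P\,\de z^2$, a solution $u$ of \eqref{eq:diff-u}, and a holomorphic vector field $X = \chi(z)\,\partial_z$, define $\psi_X = \mathcal{L}_X\phi = (\chi P_z + 2\chi_z P)\,\de z^2$ and $F_X = \chi_z + 2\chi u_z$ (equivalently $F_X e^{2u}|\de z|^2 = \mathcal{L}_X(e^{2u}|\de z|^2)$). Show that $F = F_X$ satisfies the complex variation equation \eqref{eq:diff-F} with $\dot\phi = \psi_X$.

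Let me think about this carefully.\textbf{The approach.} The cleanest route is to exploit the geometric origin of both $F_X$ and $\psi_X$ as Lie derivatives along the holomorphic vector field $X$, rather than to verify \eqref{eq:diff-F} by brute-force differentiation. The key structural fact is that the self-duality equation \eqref{eq:diff-u} is geometric: it is an equation on the metric $\e^{2u}|\de z|^2$ and the quadratic differential $\phi$, invariant under biholomorphisms. Since $X$ is holomorphic, its flow acts by biholomorphisms, so flowing a solution of \eqref{eq:diff-u} produces another solution with the flowed data. Differentiating this statement at time zero should produce exactly the linearized equation \eqref{eq:diff-F}, with the Lie derivatives $F_X = \mathcal{L}_X u$ (in the log-density sense) and $\psi_X = \mathcal{L}_X\phi$ playing the roles of $\dot u - \I v$ and $\dot\phi$. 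The plan is to make this heuristic precise.

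\textbf{Key steps.} First I would recall that \eqref{eq:diff-u}, written invariantly, asserts that the metric $g_u = \e^{2u}|\de z|^2$ has Gaussian curvature determined by $|\phi|_u$; concretely, $\Delta u = 4(\e^{2u} - \e^{-2u}|P|^2)$. I would apply the flow $\Psi_s$ of $X$ (holomorphic, hence conformal) and observe that $\Psi_s^* g_u$ is the metric attached to the log density $u\circ\Psi_s + \log|\Psi_s'|^{-1}$ and that $\Psi_s^*\phi$ is a holomorphic quadratic differential; by naturality of the curvature equation under pullback by biholomorphisms, the pulled-back data again solve \eqref{eq:diff-u} on the appropriate domain. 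Differentiating at $s=0$, the time derivative of the log density is precisely $\mathcal{L}_X$ applied to it, which in local coordinates is $\chi u_z + \bar\chi u_{\bar z} + \tfrac{1}{2}(\chi_z + \bar\chi_{\bar z})$; since $\chi$ is holomorphic this is the real expression whose ``holomorphic packaging'' gives $F_X = \chi_z + 2\chi u_z$. Simultaneously the time derivative of $\phi$ is $\mathcal{L}_X\phi = \psi_X$. Linearizing \eqref{eq:diff-u} along this family reproduces the operator $\Delta - 8(\e^{2u}+\e^{-2u}|P|^2)$ acting on the variation, with inhomogeneous term $8\e^{-2u}\overline{P}\dot P$ coming from $\dot\phi = \psi_X$; this is exactly \eqref{eq:diff-F}.

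\textbf{The main obstacle.} The delicate point is the bookkeeping that identifies the \emph{real} infinitesimal deformation coming from the flow with the \emph{complex} packaged quantity $F = \dot u - \I v$ satisfying \eqref{eq:diff-F}. The flow of $X$ moves both $u$ and the underlying coordinate, and its infinitesimal effect splits into a genuine metric variation $\dot u$ and an infinitesimal conformal (gauge) motion whose imaginary part corresponds to $v$; disentangling these so that the combination $\chi_z + 2\chi u_z$ emerges as $\dot u - \I v$ requires care. Rather than push the abstract flow argument all the way through this identification, the pragmatic route---which I expect to be what the proof actually does---is to use the geometric picture only as motivation and then verify \eqref{eq:diff-F} directly: substitute $F = F_X = \chi_z + 2\chi u_z$ into the left side of \eqref{eq:diff-F}, compute $\Delta F_X$ using holomorphicity of $\chi$ (so that $\chi_{\bar z} = 0$ and all $\bar\partial$-derivatives hit only $u$), and simplify using \eqref{eq:diff-u} to eliminate $\Delta u = 4(\e^{2u} - \e^{-2u}|P|^2)$ and its derivatives. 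One must also expand $8\e^{-2u}\overline{P}\,\dot P$ with $\dot P = \chi P_z + 2\chi_z P$ and check that everything cancels. The computation is routine but term-heavy; the one genuine input beyond algebra is the repeated use of \eqref{eq:diff-u} (and its $z$-derivative) to trade second derivatives of $u$ for the exponential terms, which is what makes the inhomogeneous term match.
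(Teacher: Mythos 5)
Your proposal is essentially correct, but it ends up on a different road from the paper. The geometric argument you sketch first is exactly what the paper does, and — contrary to your guess — the paper does \emph{not} retreat to a brute-force verification. The ``main obstacle'' you identify (matching the real Lie-derivative variation with the complex quantity $F = \dot u - \I v$) is resolved in the paper by a short trick: the flow of the real field $X + \bar X$ shows that $\re(F_X)$ solves \eqref{eq:diff-udot} with $\dot\phi = \mathcal{L}_{X+\bar X}\phi = \mathcal{L}_X\phi = \psi_X$ (holomorphicity of $\phi$ kills the $\bar X$ contribution), and then the imaginary part is handled by applying the \emph{same} statement to $\I X$, using $-\im(F_X) = \re(F_{\I X})$ together with the observation that \eqref{eq:diff-v} is \eqref{eq:diff-udot} under the substitutions $\dot u \to -v$, $\dot P \to \I \dot P$. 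Your fallback — substituting $F_X = \chi_z + 2\chi u_z$ into \eqref{eq:diff-F} directly — does work and is in fact shorter than you fear: holomorphicity of $\chi$ gives $\Delta F_X = 2\chi_z\,\Delta u + 2\chi\,\partial_z(\Delta u)$, and one application of \eqref{eq:diff-u} and its $z$-derivative produces exactly $-8\e^{-2u}\bar P(\chi P_z + 2\chi_z P)$ after the cancellations, so the identity closes in a few lines. The trade-off: the paper's route explains structurally why the statement is true (naturality of the self-duality equation under biholomorphisms) and generalizes painlessly, while your direct check is self-contained and avoids any discussion of flows; either is acceptable, but to count as complete your write-up should either carry out the displayed computation or supply the $\I X$ step rather than leaving the real/imaginary identification unresolved.
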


\begin{proof}
We begin by noting that the 
self-duality equation \eqref{eq:diff-u} is natural with respect to
biholomorphic maps, i.e.~if $\Phi$ is such a map then the log density of the pullback metric
$\Phi^*( \e^{2u} |\de z|^2)$ satisfies the equation for the pullback
differential $\Phi^* \phi$. The real vector field
$X + \bar{X}$ has a local flow which consists of holomorphic maps,
and hence gives rise to a local $1$-parameter family of solutions for
the corresponding family of pullback quadratic differentials.  Taking
the derivative of this family of solutions at $t=0$ we find that the
Lie derivative of $\e^{2u} |\de z|^2$ with respect to $X + \bar{X}$
gives a solution of the variation equation \eqref{eq:diff-udot} for
\begin{equation} \dot{\phi} = \mathcal{L}_{X+\bar{X}} \phi.\end{equation}
Specifically, if we define $\dot{u}$ by
\begin{equation}
 \dot{u} \e^{2 u} |\de z|^2 = \mathcal{L}_{X+\bar{X}}(\e^{2u} |\de z|^2)
\label{eq:udot-x}
\end{equation}
then $\dot{u}$ and $\dot{\phi}$ satisfy \eqref{eq:diff-udot}.  The
expression \eqref{eq:udot-x} is equivalent to saying that $\dot{u}$ is the
Riemannian divergence of the vector field $X + \bar{X}$ with respect
to the metric $\e^{2u} |\de z|^2$.

Now, recall that \eqref{eq:diff-F} is equivalent to the separate equations
\eqref{eq:diff-udot} for $\dot{u} = \re(F)$ and \eqref{eq:diff-v} for
$v = - \im(F)$, and that these two equations are related by the
substitutions $\dot{u} \to -v$ and $\dot{\phi} \to \I \dot{\phi}$.

For a real tensor $T$ we have
$\re(\mathcal{L}_X T) = \mathcal{L}_{X + \bar{X}} T$, and hence
$\re(F_X)$ is exactly $\dot{u}$ as defined by \eqref{eq:udot-x}, which
we have seen satisfies \eqref{eq:diff-udot} with
$\dot{\phi} = \mathcal{L}_{X+\bar{X}} \phi$.  Because $\phi$ is
holomorphic we in fact have
$\mathcal{L}_{X+\bar{X}} \phi = \mathcal{L}_X \phi = \psi_X$.  Hence
$\re(F_X)$ satisfies the desired equation.

Because $\mathcal{L}_{\I X} = \I \mathcal{L}_X$ we have
$-\im(F_X) = \re(F_{\I X})$ which therefore satisfies
\eqref{eq:diff-udot} with $\dot{\phi} = \mathcal{L}_{\I X+\bar{iX}} \phi =
\mathcal{L}_{\I X} \phi = \I \psi_X$.  Using the substitutions noted
above, this is equivalent to
$\im(F_X)$ satisfying \eqref{eq:diff-v}.
\end{proof}

\section{Exactness}
\label{sec:exactness}

Given quadratic differentials $\phi$ and $\dot{\phi}$ on a compact
Riemann surface $C$, recall that our ultimate goal is to bound the difference
\begin{equation}\Delta(\phi, \dot{\phi}) := g_\phi(\dot{\phi},\dot{\phi}) -
g_\phi^{\sf}(\dot{\phi},\dot{\phi}).\end{equation}
Though the integrals defining $g_\phi$ and $g_\phi^{\sf}$ were
previously written in terms of densities (scalar multiples of
$\de x \de y$), using the orientation of $C$ we can convert the integrand
to a differential $2$-form which we denote by $\delta$.  Also recall
that this integrand depends on the density $u$ and complex function $F$,
respectively satisfying \eqref{eq:diff-u} and \eqref{eq:diff-F}.
Explicitly, by taking the difference of the integral expressions
\eqref{eq:l2norm-combined}-\eqref{eq:l2sfnorm-combined} we find
\begin{equation}
\label{eq:delta}
\delta(\phi,\dot\phi,u,F) = \left( 4\e^{-2u}( |\dot{P}|^2 - \re(F P
\dot{\overline{P}}) ) - 2 \frac{|\dot{P}|^2}{|P|} \right) \de x
\wedge \de y,
\end{equation}
where $\dot{\phi} = \dot{P} \, \de z^2$ and as usual $\phi = P \, \de z^2$.
Thus if $u(\phi)$
and $F(\phi,\dot\phi)$ denote the unique solutions to \eqref{eq:diff-u} and
\eqref{eq:diff-F} on a compact Riemann surface $C$ for given $\phi$
and $\dot\phi$, then we have
\begin{equation}
\Delta(\phi,\dot\phi) = \int_C \delta(\phi,\dot{\phi},u(\phi),F(\phi,\dot\phi)).
\end{equation}

As mentioned in \S\ref{sec:strategy}, our technique for bounding the
integral of $\delta(\phi,\dot{\phi},u(\phi),F(\phi,\dot\phi))$ over the
region $C_{\near}$ near the zeros of $\phi$ involves approximating
$\delta$ in that region by an exact form.  The key to this approximation
is that $\delta(\phi,\dot{\phi},u,F)$ itself is exact whenever
$\dot{\phi}$ and $F$ are obtained from $\phi$ and $u$ using a
holomorphic vector field as in \autoref{thm:holo-variation}:

\begin{lem}
\label{lem:exactness}
Let $\phi = P \, \de z^2$ be a quadratic differential and $u$ a log density satisfying
\eqref{eq:diff-u}, both on a domain $U \subset \C$. Let $X = \chi \partial_z$ be a
holomorphic vector field on $U$. Let 
$\dot\phi = \cL_X \phi$ and 
$F = F_X$ as in \autoref{thm:holo-variation}.
Then $\delta(\phi, \dot\phi, u, F) = \de \beta$, where
\begin{equation}
 \beta = \left ( \e^{-2u} - |P|^{-1} \right )\left ( 2 |P|^2 \star \de
|\chi|^2 + |\chi|^2 \star \de |P|^2 \right).
\end{equation}
\end{lem}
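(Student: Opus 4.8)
The statement is a pointwise identity between two $2$-forms on $U$, so the plan is to compute both sides in the coordinate $z$ and match the coefficients of $\de x \wedge \de y$. The organizing observation is that the scalar factor $W := \e^{-2u} - \abs{P}^{-1}$ appearing in $\beta$ should also be the natural variable on the $\delta$ side. Writing $\omega := 2\abs{P}^2 \star\de\abs{\chi}^2 + \abs{\chi}^2 \star\de\abs{P}^2$ so that $\beta = W\omega$, I would expand $\de\beta = W\,\de\omega + \de W \wedge \omega$ and aim to produce a matching splitting of $\delta$ into a term proportional to $W$ and a term proportional to the first derivatives of $W$.

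To reorganize $\delta$, I would substitute $\dot P = \chi P_z + 2\chi_z P$ and $F = F_X = \chi_z + 2\chi u_z$ into \eqref{eq:delta}. The key step is to measure $F$ against its ``model'' value $\half \dot P/P$: a line of algebra gives $F = \half \dot P/P + 2\chi\left(u_z - \frac{P_z}{4P}\right)$, and differentiating the definition of $W$ yields the identity $\e^{-2u}\left(u_z - \frac{P_z}{4P}\right) = -\half W_z - \frac{W P_z}{4P}$. Feeding this into the $\re(F P \dot{\overline{P}})$ term and using $\chi P_z = \dot P - 2\chi_z P$, I expect the integrand of $\delta$ to collapse to
\begin{equation*}
\delta = \Big( 4W\big(\abs{\dot P}^2 - \re(\chi_z P\,\dot{\overline{P}})\big) + 4\re\big(\chi P\,\dot{\overline{P}}\,W_z\big)\Big)\,\de x \wedge \de y.
\end{equation*}

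It then remains to check that the two summands are exactly $W\,\de\omega$ and $\de W \wedge \omega$. For $\de\omega$ I would use holomorphicity of $P$ and $\chi$, which gives $\Delta\abs{P}^2 = 4\abs{P_z}^2$ and $\Delta\abs{\chi}^2 = 4\abs{\chi_z}^2$, together with $\de\star\de f = \Delta f\,\de x\wedge\de y$ and the wedge identity $\de f \wedge \star\de g = 2(f_z g_{\bar z} + f_{\bar z} g_z)\,\de x\wedge\de y$; assembling the resulting terms should give $\de\omega = 4\big(\abs{\dot P}^2 - \re(\chi_z P\,\dot{\overline{P}})\big)\,\de x\wedge\de y$. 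For $\de W \wedge \omega$ the same wedge identity reduces everything to a single real part, and the holomorphic cross-terms collapse because $\overline{2P\chi_z + \chi P_z} = \dot{\overline{P}}$, yielding $\de W \wedge \omega = 4\re\big(\chi P\,\dot{\overline{P}}\,W_z\big)\,\de x\wedge\de y$.

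The main obstacle is the first reorganization: recognizing that $\delta$ must be rewritten against $W$ and $W_z$ rather than against $\e^{-2u}$ directly, which is exactly what the substitution $F = \half \dot P/P + 2\chi\left(u_z - \frac{P_z}{4P}\right)$ and the trade of $\e^{-2u}u_z$ for $W_z$ accomplish. After that the matching is mechanical, driven throughout by the single algebraic fact that $\dot P = \chi P_z + 2\chi_z P$ conjugates to $\dot{\overline{P}} = \overline{2 P\chi_z + \chi P_z}$. I note that the self-duality equation \eqref{eq:diff-u} is never invoked: the identity is purely formal and holds for any log density $u$, the hypothesis on $u$ serving only to place the lemma in the context where $F_X$ solves \eqref{eq:diff-F}.
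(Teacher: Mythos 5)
Your proposal is correct and every intermediate identity checks out; it is essentially the same argument as the paper's, which likewise proves the lemma by direct coordinate computation (expanding $\delta(\phi,\mathcal{L}_X\phi,u,F_X)$ explicitly and comparing with $\de \beta = 2\im(\bar{\partial}\tilde{\beta})$ for $\tilde\beta = (\e^{-2u}-|P|^{-1})(2|P|^2\chi_z\bar\chi + |\chi|^2 P_z\bar P)\,\de z$, with the ``lengthy but elementary'' details omitted). Your bookkeeping via $W=\e^{-2u}-|P|^{-1}$ and the Leibniz splitting $\de\beta = W\,\de\omega + \de W\wedge\omega$, matched against the corresponding splitting of $\delta$, is a tidier organization of that same verification, and your closing remark that the self-duality equation \eqref{eq:diff-u} is never actually used is accurate (the paper's computation does not use it either).
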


\begin{proof}
Substituting $\dot{\phi} = \mathcal{L}_X \phi$ as given by
\eqref{eq:L-X-phi} and $F = F_X$ from \eqref{eq:F-X} into \eqref{eq:delta}, we obtain an
explicit formula in terms of $\chi$, $P$, and $u$:
\begin{multline}
\label{eq:delta-for-holo}
\delta(\phi, \mathcal{L}_\chi \phi, u, F_X) = \Biggl ( 4 \e^{-2u}
\Bigl ( %
\abs{\chi P_z}^2 + 2 \abs{\chi_z P}^2
+ 3 \re( \chi \bar{\chi}_{\bar z} P_z \bar{P})
- 2 \re( \chi \bar{\chi} P \bar{P}_{\bar z} u_z )\\
\qquad\qquad
- 4 \re( \chi \bar\chi_{\bar z} P \bar{P} u_z )
\, \Bigr)
-2 \frac{\abs{\chi P_z}^2}{|P|}
- 8 \frac{\re(\chi \bar{\chi}_{\bar z} \bar{P} P_z)}{|P|}
- 8 \abs{\chi_z}^2 |P|
\Biggr )
\de x \wedge \de y.
\end{multline}
Now we consider $\beta$. For a holomorphic function $f$, we have
\begin{equation}
\star \de |f|^2 = \star \de (f \bar{f}) = \star \left ( f_z \bar{f} \de z + f
\bar{f}_{\bar z} \de \bar{z} \right ) = -\I \left ( f_z \bar{f} \de z - f
\bar{f}_{\bar z} \de \bar{z} \right ) = 2 \im ( f_z \bar{f} \de z).
\end{equation}
Using this, we find that $\beta = 2 \im(\tilde{\beta})$ where
\begin{equation}
\tilde{\beta} = (\e^{-2u} - |P|^{-1}) \left (2 |P|^2 \chi_z
\bar{\chi} + |\chi|^2 P_z\bar{P} \right ) \de z.
\end{equation}
It is then straightforward to calculate
$\de \beta = 2 \im( \bar{\partial} \tilde{\beta})$ in terms of $P$
and $\chi$, and to verify that it is equal to
\eqref{eq:delta-for-holo}; in the latter step, it is useful to recall
$\im(c \, \de \bar{z} \wedge \de z) = 2 \re(c)\, \de x \wedge \de y$ for any
complex scalar $c$.  We omit the details of this lengthy but
elementary calculation.
\end{proof}

\section{Exponential asymptotics}
\label{sec:final}

In this section we prove \autoref{thm:main}.  To do so we return to
considering a compact Riemann surface $C$ and the ray
$\{\phi = t \phi_0\}_{t \in \R_+}$ generated by $\phi_0 \in \cB'$.
Write $\phi = P \, \de z^2$.  Let
$\dot{\phi} = \dot{P} \, \de z^2 \in T_\phi \cB' = \cB$.  Fix some
$\gamma < 4$.

Let $z_1, \ldots, z_n$ denote the zeros of $\phi$, and let $D_i$ denote
an open disk centered on $z_i$ of $\abs{\phi}$-radius
\begin{equation}  \label{eq:Rtdep}
	R = \half M(\phi) = \half t^\half M(\phi_0).
\end{equation}
This is the largest $\phi$-radius for which the sets $D_i$ are disjoint,
embedded disks.  Note that $D_i$ can also be described as the disk about $z_i$ of
$\abs{\phi_0}$-radius $\half M(\phi_0)$, and in particular the set $D_i$ is
independent of $t$.

Since we are considering asymptotic statements as $t \to \infty$, and
since by hypothesis $M(\phi_0) > 0$, we may assume when necessary that
$R$ is larger than any given constant.

Let $C_{\near} = \bigcup_i D_i$ and $C_{\far} = C \setminus C_{\near}$.
Then we have
\begin{equation}
\label{eqn:decomposition}
\Delta(\phi,\dot{\phi}) = \int_{C_{\far}} \delta(\phi,\dot\phi,u(\phi),
F(\phi,\dot\phi)) + \sum_i \int_{D_i} \delta(\phi,\dot\phi,u(\phi),
F(\phi,\dot\phi)),
\end{equation}
and we will bound these terms separately.

{\bf The ``far'' region.}  For any $z \in C_{\far}$ we have
$r(z) \geq R$.  Assume $R$ is large enough so that \autoref{thm:u-exp}
and \autoref{thm:f-exp} apply.  Then we have
$u \approx \half \log \abs{P}$ and $F \approx \half \frac{\dot P}{P}$
with respective errors bounded by $a(\gamma) \e^{-\gamma R}$ and
$a(\gamma) \|\dot\phi\|_\sigma \e^{-\gamma R}$ for some constant
$a(\gamma)$.  If these approximate equalities were exact, then
$\delta$ would vanish identically; that is, by direct substitution
into the definition \eqref{eq:delta}, we find that
\begin{equation} \delta\left(\phi, \dot\phi, \half \log \abs{P}, \half
\frac{\dot{P}}{P}\right) = 0.\end{equation}
To handle the situation at hand, we will strengthen this to show that $\delta$ is pointwise small
when $u$ and $F$ are only \emph{near} $\half \log \abs{P}$ and $\half
\frac{\dot P}{P}$ (respectively).

Again by substitution into \eqref{eq:delta}, we find that for any
scalar functions $w$ and $\mu$ we have
\begin{equation}
\delta\left(\phi, \dot{\phi}, \half \log |P| + w, \half
\frac{\dot P}{P} + \mu\right)  = \left ( 2 \frac{|\dot{P}|^2}{|P|} (\e^{-2w}
-1) - 4 \frac{\e^{-2w}}{|P|} \re ( P \bar{\dot{P}} \mu )
\right ) \de x \wedge \de y.
\label{eq:delta-pointwise}
\end{equation}
Now assume that $|w| < 1$, so that
\begin{equation}
\label{eq:w-term}
2 |\e^{-2w} -1| \leq c |w|
\end{equation}
and
\begin{equation}
\label{eq:mu-term}
\left | 4 \e^{-2w} |P|^{-1} \re(P \bar{\dot{P}} \mu) \right | \leq c |\dot{P}| |\mu|
\end{equation}
for a constant $c > 0$.  Using these estimates with
\eqref{eq:delta-pointwise} gives
\begin{equation}
\label{eq:delta-linear-in-w-mu}
\left | \frac{\delta\left(\phi, \dot{\phi}, \half \log |P| + w, \half
  \frac{\dot{P}}{P} + \mu \right)}{\de x \wedge \de y} \right | \leq
c\left ( \frac{|\dot P|^2}{|P|} |w|
+ |\dot P| |\mu| \right ).
\end{equation}
If we furthermore assume $R > 1$, then
\autoref{lem:phi-pointwise} applies to $\phi$ throughout $C_{\far}$, giving
a uniform lower bound on $\e^{-2 \sigma} |P|$.  Substituting this into the
previous bound, we can now bound $\delta$ relative to the
hyperbolic area form as follows:
\begin{equation}
\left | \delta\left(\phi, \dot{\phi}, \half \log |P| + w, \half
\frac{\dot{P}}{P} + \mu \right) \right | \leq c
\left ( \abs{\dot{\phi}}_\sigma^2  |w| + \abs{\dot{\phi}}_\sigma |\mu| \right ) \e^{2 \sigma} |\de z|^2. \label{eq:delta-area-comp}
\end{equation}
Here $c$ is a constant, but not the same constant as in
\eqref{eq:delta-linear-in-w-mu}.  We already observed that on $C_{\far}$
the integrand $\delta(\phi,\dot{\phi},u(\phi),F(\phi,\dot{\phi}))$ has
the form \eqref{eq:delta-pointwise} with
$|w| \leq a(\gamma) \e^{-\gamma R}$ and
$|\mu| \leq a(\gamma) \|\dot\phi\|_\sigma \e^{-\gamma R}$.  Thus
\begin{equation}
\label{eq:delta-pointwise-actual}
\left |
\delta(\phi,\dot\phi,u(\phi),
F(\phi,\dot\phi)) \right | \leq c'(\gamma) |\phi|_\sigma^2 \e^{-\gamma
  R} \, \e^{2 \sigma} |\de z|^2 \text{ on }C_{\far},
\end{equation}
for a constant $c'(\gamma)$. Integrating \eqref{eq:delta-pointwise-actual},
and using that the $\sigma$-area of $C_{\far}$ is bounded and
$|\dot{\phi}|_\sigma \leq \|\dot{\phi}\|_\sigma$, we obtain
\begin{equation}
\label{eq:far-bound}	
\int_{C_\far} \delta(\phi,\dot\phi,u(\phi),
F(\phi,\dot\phi)) = O\left (\|\dot\phi\|_\sigma^2 \e^{-\gamma
  R} \right )
\end{equation}
with the implicit constant depending only on $c'(\gamma)$ from
\eqref{eq:delta-pointwise-actual}.  

{\bf The ``near'' region.}
Next we consider the integral over one of the disks $D_i$ in
\eqref{eqn:decomposition}.  Identify $D_i$ with a disk $\{ |z| < R \}$
in $\C$, using a coordinate $z$ in which $\left . \phi \right |_{D_i} = z
\, \de z^2$. 

On $D_i$ there is a unique holomorphic vector field $X = \chi \partial_z$ such that
$\dot \phi = \cL_X \phi = \cL_X (z \,\de z^2)$; explicitly, if we write
\begin{equation}
  \dot\phi = \sum_n a_n z^n \, \de z^2,
\end{equation}
then
\begin{equation}
\label{eq:explicit-X}
   \chi = \sum_n \frac{a_n}{2n+1} z^n.
\end{equation}
By \autoref{thm:holo-variation}, the associated function $F_X$ defined by
$F_X \e^{2 u(\phi)} |\de z|^2 = \cL_X(\e^{2 u(\phi)} |\de z|^2)$ satisfies
\eqref{eq:diff-F} on $D_i$, which is the same equation satisfied by
$F(\phi,\dot\phi)$.  We will show that $F_X$ and $F(\phi,\dot\phi)$
are in fact exponentially close on $D_i$.

First we consider the restrictions of these functions to
$\partial D_i$, which is far from the zeros of $\phi$, allowing the
estimates of the previous sections to be applied.  By
\autoref{thm:f-exp} we have
\begin{equation}
\label{eq:Fcompact-near-semiflat}
 \left | F(\phi,\dot\phi) - \frac{1}{2} \frac{\dot{P}}{P} \right | = O(
\|\dot{\phi}\|_\sigma \e^{-\gamma R} ) \text{ on } \partial D_i.
\end{equation}
Turning to $F_X = \chi_z + 2 \chi u_z$, note that the equation
$\dot\phi = \cL_X \phi$ gives 
\begin{equation}
 \chi_z = \frac{\dot{P} - \chi P_z}{2 P} = \frac{\dot{P}}{2
  P} - \chi \partial_z(\log |P|)
\end{equation}
and thus
\begin{equation}
F_X = \chi_z + 2 \chi u_z = \frac{\dot{P}}{2P} + 2
\chi \partial_z\left(u - \frac{1}{2} \log |P|\right).
\end{equation}
By the $C^1$ bound from \autoref{thm:u-exp} we have 
\begin{equation} \label{eq:ushift-estimate}
  \partial_z\left(u - \frac{1}{2} \log |P|\right) = O(\e^{-\gamma R}) \text{ on } \partial D_i.
\end{equation}
Next we need a bound on $\abs{\chi}$ on $\partial D_i$.
For this, note that for any $z \in D_i$,
$\chi(z)$ depends linearly on $\dot\phi$, and scales as
$t^{-\frac23}$. Thus, for $t > 1$ we have an estimate $\abs{\chi(z)} < c(z) \norm{\dot\phi}_\sigma$
for some $c(z)$, and since the closure of $D_i$ is compact we can take this constant
to be independent of $z$, i.e. on $D_i$ we have
\begin{equation} \label{eq:chiz-estimate}
	\abs{\chi} = O (\norm{\dot\phi}_\sigma).
\end{equation}
Now combining \eqref{eq:ushift-estimate} and \eqref{eq:chiz-estimate}, we get
\begin{equation}
\label{eq:Fplanar-near-semiflat}
 \left | F_X - \frac{1}{2} \frac{\dot{P}}{P} \right | = O(
\|\dot{\phi}\|_\sigma \e^{-\gamma R} ) \text{ on } \partial D_i.
\end{equation}
Then by \eqref{eq:Fcompact-near-semiflat} and \eqref{eq:Fplanar-near-semiflat}
we find that the function $\mu : D_i \to \C$ defined by
\begin{equation}\mu = F_X - F(\phi,\dot\phi)\end{equation}
satisfies
\begin{equation}
\label{eq:F-near-F-X-boundary}
\mu = O(\|\dot \phi\|_\sigma \e^{-\gamma R}) \text{
  on } \partial D_i.\end{equation} Because $F_X$ and
$F(\phi,\dot\phi)$ both satisfy the linear inhomogeneous equation
\eqref{eq:diff-F}, their difference $\mu$ satisfies the associated
homogeneous equation, which has the form $ (\Delta - k) \mu = 0$ for
an everywhere positive function $k$ (compare
\eqref{eq:diff-F-named-coefs}); this implies that $|\mu|$ has no
interior maximum.  Thus $|\mu|$ achieves its maximum on
$\partial D_i$, and \eqref{eq:F-near-F-X-boundary} gives
\begin{equation} \label{eq:mu-estimate}
 \mu = O(\|\dot \phi\|_\sigma \e^{-\gamma R}) \text{ on }D_i.
\end{equation}

Next we use this estimate on $\mu$ to estimate the integral of
$\delta(\phi, \dot\phi, u(\phi), F(\phi,\dot\phi))$ over $D_i$.
We have
\begin{equation}
\label{eq:delta-split}
\begin{split}
\delta(\phi, \dot\phi, u(\phi), F(\phi,\dot\phi)) &= 
\delta(\phi, \dot\phi, u(\phi), F_X + \mu)\\
&= \delta(\phi, \dot\phi, u(\phi), F_X) + 4 \e^{-2u(\phi)} \re(\mu P
\bar{\dot{P}}) \, \de x \wedge \de y.
\end{split}
\end{equation}
Since $\e^{-2 u(\phi)} |P| \leq 1$ (by
\autoref{lem:u-rough-lower}) we have
\begin{equation}
 |4 \e^{-2u(\phi)} \re(\mu P
\bar{\dot{P}}) | \leq 4 |\mu| |\dot{P}|
\end{equation}
and the bound \eqref{eq:mu-estimate} gives
\begin{equation}
 \int_{D_i} 4 \e^{-2u(\phi)} \re(\mu P
\bar{\dot{P}}) \, \de x \wedge \de y= O(\|\dot \phi\|^2_\sigma \e^{- \gamma R} ).
\end{equation}
Considering the other term on the right hand side of 
\eqref{eq:delta-split}, by \autoref{lem:exactness} the form
$\delta(\phi, \dot \phi, u, F_X)$ is exact, so we can use
Stokes's theorem to reduce to a boundary term:
\begin{equation} \label{eq:disc-int-ibp}
	 \int_{D_i} \delta(\phi, \dot \phi, u, F_X) = \int_{\partial D_i}
\left ( \e^{-2u} - |P|^{-1} \right )\left ( 2 |P|^2 \star \de
|\chi|^2 + |\chi|^2 \star \de |P|^2 \right).
\end{equation}
It just remains to show that this boundary term is exponentially
small. Fix some $\gamma'$ with $\gamma < \gamma' < 4$.
By \autoref{thm:u-exp}, we have $(\e^{-2u} -
  |P|^{-1}) = O(\e^{-\gamma' R})$ on $\partial D_i$. Next, using
  the estimate \eqref{eq:chiz-estimate}, the fact that $P$ scales as $t$,
  and the fact that the coordinate radius of $D_i$ scales as $t^{1/3}$, we have
\begin{equation}
 \int_{\partial D_i} \left ( 2 |P|^2 \star \de
|\chi|^2 + |\chi|^2 \star \de |P|^2 \right) = O(t^\frac73 \|\dot \phi\|^2_\sigma).
\end{equation}
Using this in \eqref{eq:disc-int-ibp} gives
\begin{equation}
 \int_{D_i} \delta(\phi, \dot \phi, u, F_X) = O(t^{\frac73} \|\dot \phi\|^2_\sigma \e^{-\gamma' R} ) = O(\|\dot \phi\|^2_\sigma \e^{-
  \gamma R} ),
\end{equation}
where in the last equality we use the fact that $R \to \infty$ as $t \to \infty$
by \eqref{eq:Rtdep}.

Now we have bounded the integrals of both terms in \eqref{eq:delta-split};
combining these bounds we conclude
\begin{equation}
\label{eq:near-bound}
	\int_{D_i} \delta(\phi, \dot\phi, u(\phi), F(\phi,\dot\phi)) =  O(\|\dot \phi\|^2_\sigma \,\e^{-
  \gamma R} ).
\end{equation}

{\bf Summing up.} 
Finally, substituting the far and near bounds (\eqref{eq:far-bound} and \eqref{eq:near-bound})
into \eqref{eqn:decomposition}, we obtain
\begin{equation}
\label{eqn:Delta-final}
\Delta(\phi, \dot{\phi}) = O(\|\dot{\phi}\|_\sigma ^2 \e^{-\gamma R}).
\end{equation}
Using \eqref{eq:Rtdep}, and that $\gamma < 4$ was arbitrary, we obtain
the exponential bound from \autoref{thm:main} by taking $\gamma =
8 \alpha / M(\phi_0)$.

Finally, we consider the contributions to the multiplicative constant
in \eqref{eqn:Delta-final}.  We have seen that the individual
exponential estimates in the components of $C_{\near}$ depend only on
$\gamma$.  The number of such components is linear in the genus of
$C$.  The estimate in $C_{\far}$ obtained above is also linear in the
hyperbolic area of $C$, or equivalently in the genus.  Overall we find
the multiplicative constant in the final estimate depends on
$\gamma$ and the topology of $C$, or equivalently, on $\alpha$,
$M(\phi_0)$, and the genus.  This completes the proof of \autoref{thm:main}.

\vfill
\pagebreak

\vspace{2em}

\noindent Department of Mathematics, Statistics, and Computer Science\\
University of Illinois at Chicago\\
Chicago, IL\\
\texttt{david@dumas.io}

\vspace{1em}

\noindent Department of Mathematics\\
University of Texas at Austin\\
Austin, TX\\
\texttt{neitzke@math.utexas.edu}

\end{document}